\newtheorem{thm}{Theorem}[section]
\newtheorem{prop}[thm]{Proposition}
\newtheorem{cor}[thm]{Corollary}
\newtheorem{defn}[thm]{Definition}
\newtheorem{lem}[thm]{Lemma}
\newtheorem{conj}[thm]{Conjecture}
\newtheorem{notation}[thm]{Notation}
\newtheorem{rem}[thm]{Remark}
\newtheorem{example}[thm]{Example}
\newcommand{\bN}{\mathbb{N}}
\newcommand{\bR}{\mathbb{R}}
\newcommand{\Z}{\mathbb{Z}}
\renewcommand{\aa}{\alpha}
\newcommand{\vs}[1]{\langle #1 \rangle}
\title{Iterated sumsets and Hilbert functions}
\author{Shalom Eliahou\footnote{LMPA-ULCO, Calais, France. Email: eliahou(at)univ-littoral.fr}~ and 
Eshita Mazumdar\footnote{Stat-Math Unit, ISI Bengaluru. Email: eshita\_vs(at)isibang.ac.in}}
\date{}
\begin{document}
\maketitle

\begin{abstract} 
Let $A$ be a finite subset of an abelian group $(G,+)$. For $h \in \bN$, let $hA=A+\dots+A$ denote the $h$-fold iterated sumset of $A$. If $|A| \ge 2$, understanding the behavior of the sequence of cardinalities $|hA|$ is a fundamental problem in additive combinatorics. For instance, if $|hA|$ is known, what can one say about $|(h-1)A|$ and $|(h+1)A|$? The current classical answer is given by $$|(h-1)A| \ge |hA|^{(h-1)/h},$$ a consequence of Pl\"unnecke's inequality based on graph theory. We tackle here this problem with a completely new approach, namely by invoking Macaulay's classical 1927 theorem on the growth of Hilbert functions of standard graded algebras. With it, we first obtain demonstrably strong bounds on $|hA|$ as $h$ grows. Then, using a recent condensed version of Macaulay's theorem, we derive the above Plünnecke-based estimate and significantly improve it in the form
$$|(h-1)A| \ge \theta(x,h)\hspace{0.4mm}|hA|^{(h-1)/h}$$ for $h \ge 2$ and some explicit factor $\theta(x,h) > 1$, where $x \in \bR$ satisfies $x \ge h$ and $|hA|=\binom{x}{h}$. Equivalently and more simply, 
$$
|(h-1)A| \ge \frac hx\: |hA|.
$$
We show that $\theta(x,h)$ often exceeds $1.5$ and even $2$, and asymptotically tends to $e\approx 2.718$ as $x$ grows and $h$ lies in a suitable range depending on $x$.
\end{abstract}

\begin{quote}
\textbf{Keywords:}
Pl\"unnecke's inequality; Standard Graded Algebra; Macaulay's Theorem; Binomial representation.

\textbf{MSC2020:} 11P70, 13P25, 05E40
\end{quote}

\section{Introduction}

Let $A$ be a nonempty finite subset of an abelian group $(G,+)$. For any $h \in \bN_+=\{1,2,\dots\}$, we denote by $hA$ the $h$-fold iterated sumset of $A$, i.e.
$$
hA = A+\dots+A =  \{x_1+\dots+x_h \mid x_i \in A \, \textrm{ for all } 1 \le i \le h\}.
$$
As usual, we set $0A=\{0\}$. A central problem in additive combinatorics is to understand the behavior of $|hA|$ as $h$ grows. Asymptotically, it is known that $|hA|$ is eventually polynomial in $h$. See e.g.~\cite{K1, K2,N1}. But not much is known about this polynomial and, for $h$ small, the behavior of $|hA|$ may wildly depend on the structure, or lack thereof, of $A$. For instance, if $A$ is a subset of $\Z$ such that $|A|=n$, then
$$
h(n-1)+1 \le |hA| \le \binom{n-1+h}{h},
$$
both bounds being attained in suitable cases: arithmetic progressions for the lower bound, and so-called $B_h$-sets for the upper bound. The latter is best understood by noting that this binomial coefficient counts the number of monomials of degree $h$ in $|A|$ commuting variables. See e.g.~\cite[Sections 2.1 and 4.5]{TV} or~\cite[Section 3.2]{EF}.

\smallskip
Here we address the following question. If $h \ge 2$ and $|hA|$ is known, what estimates on $|(h-1)A|$ and $|(h+1)A|$ can one derive? The classical answer, given by Pl\"unnecke's inequality and based on graph theory~\cite{P}, is as follows:
\begin{equation}\label{plunnecke}
|(h-1)A| \ge |hA|^{(h-1)/h}.
\end{equation}
See also~\cite{R1, N1,TV}. In this paper, we derive this bound with a completely new approach, and we significantly improve it along the way. Our approach relies on Macaulay's classical 1927 theorem characterizing the Hilbert functions of standard graded algebras~\cite{M}. We apply that theorem to a suitable standard graded $K$-algebra $R=R(A)=\oplus_{h \ge 0} R_h$ having the property 
$$
\dim_K R_h = |hA|
$$
for all $h \ge 0$. Using a recent condensed version of Macaulay's theorem~\cite{E}, we  improve~\eqref{plunnecke} as follows. Denote
$$
\theta(x,h) = \frac hx \binom{x}{h}^{\hspace{-1mm} 1/h}
$$
for $x \in \bR$ and $h \in \bN$. If $|A| \ge 2$, our improved bound implies
\begin{equation}\label{strengthening}
|(h-1)A| \ge \theta(x,h) |hA|^{(h-1)/h},
\end{equation}
where $x$ is the unique real number larger than $h$ such that $|hA|=\binom{x}{h}$. This ensures $\theta(x,h) > 1$. In fact, the factor $\theta(x,h)$ often exceeds $1.5$ and even $2$, as shown in Sections~\ref{sec theta ge 1.5} and~\ref{sec theta ge 2}. For instance, for $h=12$ we have $\theta(x,12)) > 2.013$ for all $x \ge 50$. This implies in turn that if $A$ satisfies $|12A| \ge 121,\!400,\!000,\!000$, then
$$|11A| \ge 2\: |12A|^{11/12}.$$
The wide occurrence of the case $\theta(x,h) \ge 2$ is described in more detail in
Section~\ref{sec theta ge 2}. Remarkably, for $x$ large enough and suitable values of $h$ depending on $x$, the factor $\theta(x,h)$ approaches $e \approx 2.718$, the basis of the natural logarithm. For instance, this occurs for all $x \ge 10^6$ at $h=3000$. See also Section~\ref{sec high}, where strong evidence suggests that $\lim_{x \to \infty} \theta(x,\lfloor x^{1/2}\rfloor) = e$.
Three general remarks are in order here.

\begin{rem} \emph{Our results are stated for finite subsets of an abelian group $G$, but they hold more generally if $G$ is a commutative semigroup, as in~\cite{N2} for instance.}
\end{rem}

\begin{rem} \emph{
Commutative algebra has already been applied to 
estimate the growth of iterated sumsets. In particular, the Hilbert polynomial of graded modules has been used to determine the asymptotic behavior of the function $h \mapsto |hA|$, and more generally of the function $(h_1,\dots,h_r) \mapsto |B+h_1A_1+\cdots+h_rA_r|$. See~\cite{K1, K2, N2, N1}. However, to the best of our knowledge, the only previous application of Macaulay's theorem to additive combinatorics is in \cite{E}, where the above-mentioned condensed version is established and applied to yield an asymptotic solution of Wilf's conjecture on numerical semigroups.}
\end{rem}

\begin{rem} \emph{
Another way of comparing $|hA|$ with $|(h-1)A|$ has been made, at least for $A \subset \Z$, by seeking to bound the difference $|hA|-|(h-1)A|$ from below rather than the quotient $|hA|/|(h-1)A|$ from above~\cite{L}. In the study of the difference $|hA|-|(h-1)A|$, a main tool is Kneser's theorem, whereas for the quotient $|hA|/|(h-1)A|$, the classical one is Pl\"unnecke's inequality, and an additional one is now Macaulay's theorem as made plain in this paper.}
\end{rem}

There is a vast literature on Plünnecke's inequality, its rich applications to additive combinatorics and its successive refinements, such as the Plünnecke-Ruzsa inequality for instance~\cite{R2}. Besides dedicated chapters in~\cite{R1, N1, TV}, see also the nice survey~\cite{Pet} and its many references.

\smallskip
The contents of this paper are as follows. In Section~\ref{sec R(A)}, we construct a graded algebra $R(A)$ whose Hilbert function exactly models the sequence $|hA|_{h \ge 0}$. In Section~\ref{hilbert}, we recall Macaulay's theorem on Hilbert functions and the recent condensed version that we shall use. We prove our main results in Section~\ref{sec main}. The first ones, Theorems~\ref{main thm 1} and~\ref{main thm 2}, are obtained by applying Macaulay's theorem and its condensed version to the algebra $R(A)$. The strength of these results is then illustrated with the specific case $|5A|=100$. Here, Plünnecke's inequality implies $|4A| \ge 40$ and $|6A| \le 251$, whereas our method yields much sharper and almost optimal bounds, namely $|4A| \ge 61$ and $|6A| \le 152$. As our next main result, Theorem~\ref{thm vs}, we derive the Pl\"unnecke-based estimate~\eqref{plunnecke} from Theorem~\ref{main thm 2} and improve it by some multiplicative factor $\theta(x,h) > 1$. The numerical behavior of that factor is studied in Section~\ref{sec theta} and shown to often exceed $1.5$ and even $2$. In Section~\ref{sec presentation}, we give a presentation of $R(A)$ by generators and relations. We conclude the paper in Section~\ref{sec conclusion} with related questions and remarks.

\section{The graded algebra $R(A)$}\label{sec R(A)}

Let $A$ be a finite subset of an abelian group. Here we associate to $A$ a standard graded algebra $R(A)$ whose Hilbert function models the sequence $|hA|$. We start by recalling some basic terminology. 
\begin{defn} A \emph{standard graded algebra} is a commutative algebra $R$ over a field $K$ endowed with a vector space decomposition $R=\oplus_{i\ge 0}\, R_i$ such that $R_0=K$, $R_iR_j \subseteq R_{i+j}$ for all $i,j \ge 0$, and which is generated as a $K$-algebra by finitely many elements in $R_1$.
\end{defn}
It follows from the definition that each $R_i$ is a finite-dimensional vector space over $K$. Moreover, as $R$ is generated by $R_1$, we have $R_iR_j=R_{i+j}$ for all $i,j \ge 0$, whence $R_i=R_1^i$, the $i$-fold iterated \emph{productset} of $R_1$.
\begin{defn} Let $R=\oplus_{i\ge 0}\, R_i$ be a standard graded algebra. The \emph{Hilbert function} of $R$ is the map $i \mapsto d_i$ associating to each $i \in \bN$ the dimension
$$
d_i \ = \ \dim_{K} R_i
$$
of $R_i$ as a vector space over $K$.
\end{defn}
Thus $d_0=1$, and $R$ is generated as a $K$-algebra by any $d_1$ linearly independent elements of $R_1$. 

\medskip
Let now $(G,+)$ be an abelian group. Consider the \emph{group algebra} $K[G]$ of $G$. Its canonical $K$-basis is the set of symbols $\{t^g \mid g \in G\}$, and its product is induced by the formula
$$t^{g_1}t^{g_2}=t^{g_1+g_2}$$
for all $g_1,g_2 \in G$. Consider now $S=K[G][Y]$, the one-variable polynomial algebra over $K[G]$. Then $S$ has for $K$-basis the set 
$$\mathcal{B}=\{t^gY^n \mid g \in G, n \in \bN\},$$ and the product of any two basis elements is given by
$$
t^{g_1}Y^{n_1} \cdot t^{g_2}Y^{n_2} = t^{g_1+g_2}Y^{n_1+n_2}
$$
for all $g_1,g_2 \in G$ and all $n_1, n_2 \in \bN$. The \emph{degree} of a basis element is defined as
$$
\deg(t^gY^n) = n
$$
for all $g \in G$ and all $n \in \bN$. This endows $S$ with the structure of a graded algebra. Thus $S=\oplus_{h \ge 0}S_h$, where $S_h$ is the $K$-vector space with basis the set $\{t^g Y^h \mid g \in G\}$.
\begin{defn} Let $A=\{a_1,\dots,a_n\}$ be a nonempty finite subset of $G$. We define $R(A)$ to be the $K$-subalgebra of $S$ spanned by the set 
$$\{t^{a_1}Y,\dots,t^{a_n}Y\}.$$ 
\end{defn}
Thus $R(A)$, being finitely generated over $K$ by elements of degree 1, is a standard graded algebra. We then have $R=\oplus_{h \ge 0} R_h$, where $R_h$ is the $K$-vector space with basis the set $\{t^bY^h \mid b \in hA\}$. It follows that 
\begin{equation}\label{model}
\dim R_h=|hA|
\end{equation}
for all $h \ge 0$, as desired.

\smallskip

For future work on $R(A)$, it is algebraically important to determine the relations between its given generators $t^{a_i}Y$. This is done in Section~\ref{sec presentation}.

\section{Macaulay's theorem} \label{hilbert}

We now turn to Macaulay's theorem~\cite{M} and a recent condensed version of it~\cite{E}. Macaulay's theorem gives a necessary and sufficient condition for a numerical function $\bN \to \bN$ to be the Hilbert function of some standard graded algebra. It rests on the so-called \emph{binomial representations} of integers. Here is some background information. 
\begin{prop} Let $a \ge i \ge 1$ be positive integers. There are unique integers $a_i > a_{i-1} > \cdots > a_1 \ge 0$ such that 
$$
a = \sum_{j=1}^i \binom{a_j}j. 
$$
\end{prop}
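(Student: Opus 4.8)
The plan is to prove existence and uniqueness separately, both by induction on $i$, the shared engine being that the top index $a_i$ is forced to be the largest integer $m$ with $\binom{m}{i}\le a$. It is convenient to prove the slightly more flexible statement allowing any $a\ge 0$ (not only $a\ge i$), since the remainders produced below need not satisfy $a\ge i$; the hypothesis $a\ge i$ then merely guarantees the bonus $a_i\ge i$, via $\binom{i}{i}=1\le a$.

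For existence I would run the greedy algorithm: pick $a_i$ to be the largest integer with $\binom{a_i}{i}\le a$, set $r=a-\binom{a_i}{i}$, and recurse on the pair $(r,\,i-1)$. The decisive estimate comes from Pascal's rule: since $a_i$ is maximal we have $\binom{a_i+1}{i}>a$, and $\binom{a_i+1}{i}=\binom{a_i}{i}+\binom{a_i}{i-1}$ gives
$$
r \;=\; a-\binom{a_i}{i} \;<\; \binom{a_i}{i-1}.
$$
Because $\binom{\cdot}{i-1}$ is nondecreasing, this bound forces the next greedy choice $a_{i-1}$ (the largest integer with $\binom{a_{i-1}}{i-1}\le r$) to satisfy $\binom{a_{i-1}}{i-1}\le r<\binom{a_i}{i-1}$, hence $a_{i-1}<a_i$, which propagates the strict descent down the recursion.

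For uniqueness the key tool is the identity
$$
\binom{m}{i}+\binom{m-1}{i-1}+\cdots+\binom{m-i+1}{1} \;=\; \binom{m+1}{i}-1,
$$
which I would establish by a short induction on $i$ using Pascal's rule (it is a form of the hockey-stick identity). Since each $\binom{a_j}{j}$ is nondecreasing in its top argument, a strictly decreasing sequence with fixed top term $a_i=m$ satisfies $a_j\le m-i+j$, so the identity bounds the whole sum by $\binom{m+1}{i}-1$; together with the trivial lower bound $\binom{m}{i}$ this pins the value into the interval $\binom{m}{i}\le a<\binom{m+1}{i}$. These intervals are disjoint and partition $\{0,1,2,\dots\}$ as $m$ ranges over $\{i-1,i,i+1,\dots\}$, so the top index $m=a_i$ is uniquely determined — and equals the greedy choice. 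Subtracting $\binom{a_i}{i}$ and applying the inductive hypothesis to $(a-\binom{a_i}{i},\,i-1)$ then fixes the remaining indices.

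The one point I expect to require care is the bookkeeping in the degenerate steps of the existence argument, where the remainder reaches $0$ before all $i$ terms have been produced. There the greedy rule selects $a_j=j-1$ (the largest index with $\binom{a_j}{j}=0$), and the tail fills out as $j-1,j-2,\dots,0$; one must check that this keeps the sequence strictly decreasing and down to $a_1\ge 0$ while contributing nothing to the sum. Apart from this, the entire proof reduces to Pascal's rule, the monotonicity of $n\mapsto\binom{n}{j}$, and the telescoping identity above, the real content being the observation that both halves hinge on the single inequality $\binom{a_i}{i}\le a<\binom{a_i+1}{i}$.
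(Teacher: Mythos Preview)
Your argument is correct: the greedy choice of $a_i$ as the largest integer with $\binom{a_i}{i}\le a$, together with Pascal's rule to force $a_{i-1}<a_i$, gives existence; and the telescoping identity $\sum_{j=1}^{i}\binom{m-i+j}{j}=\binom{m+1}{i}-1$ pins $a$ into $[\binom{m}{i},\binom{m+1}{i})$ and hence determines $a_i$ uniquely, with the rest following by induction. Your handling of the degenerate tail (filling with $a_j=j-1$ once the remainder vanishes) is also fine.

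The paper does not actually prove this proposition: its proof consists solely of a citation to standard references (Bruns--Herzog and Peeva). Immediately afterward it describes, without justification, the same greedy procedure you use for existence. So you have not taken a different route from the paper; rather, you have supplied the argument the paper chose to delegate to the literature, and your existence half is exactly the algorithm the paper sketches informally.
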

\begin{proof} See e.g. \cite{BH, P}.
\end{proof}

This expression is called the $i$th \emph{binomial representation of $a$}. Producing it is computationally straightforward: take for $a_i$ the largest integer such that 
$
\binom{a_i}{i} \le a,
$
and complete $\binom{a_i}{i}$ by adding to it the $(i-1)$th binomial representation of $a-\binom{a_i}{i}$. We omit trails of $0$'s, if any. For instance, for $a = 10$ and $i=3$, we abbreviate $10=\binom 53+\binom 12+\binom 01$ as simply $10=\binom 53$.

\begin{notation} Let $a \ge i \ge 1$ be positive integers. Let
$ \displaystyle
a = \sum_{j=1}^i \binom{a_j}j
$
be its $i$\emph{th} binomial representation. We denote
$ \displaystyle
a^{\vs{i}} = \sum_{j=1}^i \binom{a_j+1}{j+1}
$
and $0^{\vs{i}}=0$.
\end{notation}
Note that the defining formula of $a^{\vs{i}}$ yields the $(i+1)$th binomial representation of the integer it sums to.

\smallskip
Here is one half of Macaulay's classical result, constraining the possible Hilbert functions of standard graded algebras \cite{M}.
\begin{thm}\label{thm macaulay} Let $R = \oplus_{i \ge 0} R_i$ be a standard graded algebra over a field $K$, with Hilbert function $d_i = \dim_{K}R_i$. Then for all $i \ge 1$, we have
\begin{equation}\label{macaulay}
d_{i+1} \ \le \ d_i^{\vs{i}}.
\end{equation}
\end{thm}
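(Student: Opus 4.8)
The plan is to realize $R$ as a quotient of a polynomial ring, reduce to monomial ideals without changing the Hilbert function, and then reduce to the single extremal case of a lexicographic ideal, where the bound becomes an explicit binomial identity. First I would pick a $K$-basis $f_1,\dots,f_{d_1}$ of $R_1$; since $R$ is generated by $R_1$, these furnish a graded surjection $S=K[x_1,\dots,x_{d_1}] \twoheadrightarrow R$, $x_j \mapsto f_j$, so that $R \cong S/I$ for some homogeneous ideal $I$ and $d_i = \dim_K S_i - \dim_K I_i$ for every $i$. Fixing a term order and passing to the initial ideal $\operatorname{in}(I)$ leaves each $\dim_K(S/I)_i$ unchanged, because the standard monomials form a graded $K$-basis of $S/I$; hence I may assume $I$ is a monomial ideal.

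With $I$ monomial, the multiplication maps give $I_{i+1} \supseteq \partial I_i$, where $\partial I_i = \sum_{j=1}^{d_1} x_j I_i$ is the shadow of the degree-$i$ part, so $\dim_K R_{i+1} \le \dim_K S_{i+1} - \dim_K \partial I_i$. Thus the desired inequality follows once I bound $\dim_K \partial I_i$ from below in terms of $\dim_K I_i$, or equivalently in terms of $d_i$. This is the combinatorial heart of Macaulay's theorem: among all sets of monomials of degree $i$ of a given cardinality, the lexicographic initial segment has the smallest shadow, so the lex-segment ideal $I^{\mathrm{lex}}$ with $\dim_K(S/I^{\mathrm{lex}})_i = d_i$ maximizes $\dim_K(S/I)_{i+1}$ over all monomial ideals with the same value in degree $i$. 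It therefore suffices to verify the bound for $I^{\mathrm{lex}}$. The standard route to this extremality is a compression argument in the spirit of Kruskal--Katona: one repeatedly replaces $I_i$ by a lexicographically more extreme set of the same size without enlarging its shadow, terminating at the lex segment.

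Finally, for the lex-segment ideal the inequality becomes an identity obtained by bookkeeping. Writing the $i$th binomial representation $d_i = \sum_{j=1}^{i}\binom{a_j}{j}$, the $d_i$ standard monomials of degree $i$ are exactly the $d_i$ lexicographically smallest monomials, and a direct count of the degree-$(i+1)$ standard monomials---using the combinatorial meaning of the binomial representation, namely that $\binom{a_j}{j}$ counts degree-$j$ monomials below a threshold in a fixed number of variables---reproduces $\sum_{j=1}^{i}\binom{a_j+1}{j+1}=d_i^{\vs{i}}$. Combining the three steps yields $d_{i+1} \le d_i^{\vs{i}}$ in general. I expect the compression/extremality step to be the genuine obstacle: the passage to monomial ideals is routine once one knows that taking initial ideals preserves the Hilbert function, and the closing binomial computation is mechanical, but establishing that the lexicographic segment minimizes the shadow is a delicate extremal combinatorial argument and is where all the real difficulty of Macaulay's theorem resides.
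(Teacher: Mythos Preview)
Your sketch is a faithful outline of the standard proof of Macaulay's theorem: present $R$ as a quotient of a polynomial ring, pass to an initial ideal via a Gr\"obner deformation to reduce to monomial ideals, invoke the lex-segment extremality (the Macaulay/Kruskal--Katona compression argument), and finish with the explicit binomial bookkeeping for the lex case. Each step is correctly identified, and you rightly single out the compression step as the real content.

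However, the paper does not prove this theorem at all. It is quoted as a classical result from 1927, and the reader is simply referred to standard sources (Bruns--Herzog, Peeva, Mermin--Peeva). So there is nothing to compare: you have supplied a proof where the paper deliberately supplies none. Your approach is the standard one found in those references; it is correct in outline, but strictly speaking goes well beyond what the paper itself contains or requires.
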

Remarkably, the converse also holds in Macaulay's theorem, but we shall not need it here. That is, satisfying \eqref{macaulay} for all $i \ge 1$ \textit{characterizes} the Hilbert functions of standard graded algebras. See e.g. \cite{BH,MP,P}.

\begin{example} Consider the sequence 
$$(m_0,m_1,m_2,m_3,m_4,m_5,m_6)=(1,5,15,33,61,100,152).$$
Then $m_{i+1} \le m_i^{\vs{i}}$ for all $i=1,\dots,5$ as readily checked. Hence there exists a standard graded algebra $R=\oplus_{j \ge 0} R_{j}$ whose values of $\dim R_i$ for $i=0,\dots,6$ are exactly modeled by the sequence $(m_{0},\dots,m_6)$. For instance, one may take $R=S/J$, where $S=K[X_1,\dots,X_5]$ and $J=(X_5^3, X_4X_5^2,X_3^3X_5^2)$.
\end{example}

\subsection{A condensed version}

For our new derivation of the Plünnecke-based estimate~\eqref{plunnecke}, we shall need the following condensed version of Macaulay's theorem established in \cite{E}. For $m\in \mathbb{N}$ and $x\in \mathbb{R}$, denote as usual 
$$\binom{x}{m}= \frac{x(x-1)\cdots(x-m+1)}{m!} = \prod_{i=0}^{m-1}\frac{x-i}{m-i}.$$ 
In particular, $\displaystyle \binom {x}{0} = 1$. We shall constantly need the following observations.
\begin{lem}\label{Onto}
 Let $i\geq 1$ be an integer. Then the map $y\mapsto \binom {y}{i}$ is an increasing continuous bijection (in fact, a homeomorphism) from $[i-1, \infty)$ to $[0, \infty).$ In particular, for any real numbers $y_1,y_2 \ge i-1$, we have
 \begin{equation}\label{increasing}
 y_1 \le y_2 \iff \binom {y_1}{i} \le \binom {y_2}{i}.
 \end{equation}
 \end{lem}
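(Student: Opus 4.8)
The plan is to exploit the explicit factorization $\binom{y}{i} = \frac{1}{i!}\prod_{k=0}^{i-1}(y-k)$, which exhibits the map as a degree-$i$ polynomial in $y$, hence continuous on all of $\bR$ and in particular on $[i-1,\infty)$. The substantive work is to prove strict monotonicity and to pin down the image.

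First I would establish strict monotonicity on $[i-1,\infty)$. For $y > i-1$, every factor $y-k$ with $0 \le k \le i-1$ satisfies $y-k \ge y-(i-1) > 0$, so each factor is a strictly positive, strictly increasing function of $y$ on $(i-1,\infty)$. A product of finitely many strictly positive strictly increasing functions is again strictly increasing: if $0 < f(y_1) < f(y_2)$ and $0 < g(y_1) < g(y_2)$, then $f(y_1)g(y_1) < f(y_2)g(y_2)$, and one iterates this over the $i$ factors. Hence $y \mapsto \binom{y}{i}$ is strictly increasing on $(i-1,\infty)$, and by continuity on the whole closed interval $[i-1,\infty)$.

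Next I would identify the endpoints of the image. At $y = i-1$ the factor $y-(i-1)$ vanishes, so $\binom{i-1}{i} = 0$; and since the leading term is $y^i/i!$, we have $\binom{y}{i} \to +\infty$ as $y \to \infty$. Combining continuity with strict monotonicity, the Intermediate Value Theorem shows that the map attains every value in $[0,\infty)$ exactly once, so it is a continuous strictly increasing bijection from $[i-1,\infty)$ onto $[0,\infty)$. A continuous strictly monotone bijection between intervals automatically has a continuous (in fact strictly increasing) inverse, which yields the homeomorphism claim. Finally, the equivalence \eqref{increasing} is merely a restatement of strict monotonicity: for $y_1,y_2 \ge i-1$, one has $y_1 \le y_2$ if and only if $\binom{y_1}{i} \le \binom{y_2}{i}$.

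I do not anticipate a genuine obstacle here; the only points requiring mild care are the elementary observation that a product of positive increasing functions is increasing (which delivers strictness uniformly in $i$), and the standard topological fact that the inverse of a monotone continuous bijection between intervals is continuous, from which the homeomorphism assertion follows.
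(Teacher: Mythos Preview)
Your argument is correct and complete. The paper's own proof is a one-line reference: it says the statement is a direct consequence of Rolle's theorem and cites \cite[Lemma 5.6]{E}. The intended reasoning is that $\binom{y}{i}$ is a degree-$i$ polynomial with its $i$ real roots at $0,1,\dots,i-1$; Rolle's theorem forces all $i-1$ critical points to lie strictly between $0$ and $i-1$, so the derivative cannot vanish on $(i-1,\infty)$, and positivity of the leading coefficient then yields strict increase there.

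Your route is genuinely different: instead of locating the critical points via Rolle, you use the factorization $\binom{y}{i}=\tfrac{1}{i!}\prod_{k=0}^{i-1}(y-k)$ directly and observe that on $(i-1,\infty)$ each linear factor is strictly positive and strictly increasing, so their product is as well. This avoids calculus entirely and is arguably more elementary; the Rolle argument, on the other hand, generalizes effortlessly to any polynomial once its real roots are known, without needing an explicit factorization into monotone pieces. Both approaches handle the endpoint, surjectivity, and homeomorphism claims the same way.
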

 \begin{proof} A direct consequence of Rolle's theorem. See e.g.~\cite[Lemma 5.6]{E}.
 \end{proof}

\begin{lem}\label{lem binom} Let $h,d \ge 1$ be positive integers. Then there exists a unique real number $x \ge h$ such that $\displaystyle d=\binom xh$. 
\end{lem}
\begin{proof} By the above lemma, there is a unique real number $x \ge h-1$ such that $\displaystyle d=\binom xh$. Since $d \ge 1$, we have $\displaystyle \binom xh \ge \binom hh$. Hence $x \ge h$ by \eqref{increasing}.
\end{proof}
 
\medskip
Here is the condensed version of Macaulay's theorem that we shall use in the next section.

\begin{thm}\label{thm condensed}
 Let $R = \oplus_{i\ge 0} R_i$ be a standard graded algebra over the field $\mathbb{K,}$
 with Hilbert function $d_i = \dim_{\mathbb{K}}R_i $ for $i \geq 0.$ Let $h \geq 1$ be an integer. Let $x \ge h-1$ be the unique real number such that $\displaystyle d_h = \binom {x}{h}.$ Then
 $$d_{h-1} \geq \binom{x-1}{h-1} \text{ and }\, d_{h+1}\leq \binom {x+1}{h+1}.$$
  \end{thm}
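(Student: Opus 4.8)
The plan is to reduce both inequalities to the discrete Macaulay bound \eqref{macaulay} of Theorem~\ref{thm macaulay} and to bridge the gap between the integer-valued operation $a \mapsto a^{\vs{i}}$ and the real-analytic map $\binom{x}{i} \mapsto \binom{x+1}{i+1}$ by means of a single comparison lemma. Concretely, I would isolate the following statement as the technical core: \emph{for every integer $i \ge 1$ and every integer $a \ge 0$, if $x \ge i-1$ is the real number with $\binom{x}{i}=a$, then $a^{\vs{i}} \le \binom{x+1}{i+1}$}, with equality precisely when $x$ is an integer. Granting this, the theorem follows in a few lines from Macaulay's inequality together with the monotonicity of $y \mapsto \binom{y}{i}$ recorded in Lemma~\ref{Onto}.

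For the upper bound I would apply \eqref{macaulay} at level $h$ to get $d_{h+1} \le d_h^{\vs{h}}$ and feed $a = d_h = \binom{x}{h}$ into the comparison lemma with $i=h$, obtaining at once $d_{h+1} \le d_h^{\vs{h}} \le \binom{x+1}{h+1}$. For the lower bound (assuming $h \ge 2$, the case $h=1$ being the trivial $d_0 = 1 = \binom{x-1}{0}$, and the degenerate case $d_{h-1}=0$ being immediate) I would run the same ingredients in reverse. By Lemma~\ref{lem binom} there is a unique real $y \ge h-1$ with $d_{h-1}=\binom{y}{h-1}$. Macaulay's inequality \eqref{macaulay} at level $h-1$ gives $\binom{x}{h} = d_h \le d_{h-1}^{\vs{h-1}}$, and the comparison lemma with $i=h-1$ bounds the right-hand side by $\binom{y+1}{h}$. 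Thus $\binom{x}{h} \le \binom{y+1}{h}$; since $x$ and $y+1$ both lie in $[h-1,\infty)$, the equivalence \eqref{increasing} forces $x \le y+1$, that is $y \ge x-1$. A second application of \eqref{increasing}, now with $i=h-1$, converts $y \ge x-1$ into $d_{h-1} = \binom{y}{h-1} \ge \binom{x-1}{h-1}$, as desired.

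The main obstacle is therefore the comparison lemma, which is exactly the content of the condensed form of Macaulay's theorem proved in \cite{E}; alternatively one can simply cite it. To prove it from scratch I would induct on $i$. The base case $i=1$ is an equality, since $a=\binom{x}{1}$ forces $x=a$ and $a^{\vs{1}} = \binom{a+1}{2} = \binom{x+1}{2}$. For the inductive step I would peel off the leading term of the $i$th binomial representation: writing $a = \binom{a_i}{i} + r$ with $a_i = \lfloor x \rfloor$ and $r = \sum_{j<i}\binom{a_j}{j}$ the $(i-1)$th representation of the remainder, one has $a^{\vs{i}} = \binom{a_i+1}{i+1} + r^{\vs{i-1}}$, and the inductive hypothesis bounds $r^{\vs{i-1}}$ by $\binom{w+1}{i}$, where $w \ge i-2$ satisfies $\binom{w}{i-1}=r$. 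Using Pascal's identity $\binom{s+1}{k}=\binom{s}{k}+\binom{s}{k-1}$, valid for real upper entries, to rewrite $\binom{x+1}{i+1}$, $\binom{a_i+1}{i+1}$ and $\binom{w+1}{i}$, the target inequality $a^{\vs{i}} \le \binom{x+1}{i+1}$ collapses to the purely analytic estimate $\binom{x}{i+1} - \binom{a_i}{i+1} \ge \binom{w}{i}$ on the interval $[a_i,x]$ of length less than $1$. Both sides vanish at $x=a_i$ and agree in the limit $x \to a_i+1$, so this is a genuine convexity-type comparison between binomial functions, and verifying it is where the real work lies; it is precisely the delicate step carried out in \cite{E}.
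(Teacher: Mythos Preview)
The paper's own ``proof'' of this theorem is simply the one-line citation ``See~\cite{E}'', so there is nothing to compare at the level of this paper. Your proposal goes further and correctly sketches the argument that actually appears in~\cite{E}: you reduce both bounds to Macaulay's inequality~\eqref{macaulay} together with the key comparison $a^{\vs{i}} \le \binom{x+1}{i+1}$ whenever $a=\binom{x}{i}$ with $x \ge i-1$, and you rightly identify this last inequality as the technical heart (it is precisely \cite[Theorem~5.9]{E}, which the paper also invokes later). Your derivation of the two inequalities from the comparison lemma is clean and correct, including the monotonicity step via~\eqref{increasing}; your inductive sketch of the comparison lemma is also on the right track, with the honest admission that the final convexity-type estimate $\binom{x}{i+1}-\binom{a_i}{i+1} \ge \binom{w}{i}$ is where the real work lies and is deferred to~\cite{E}. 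Since you yourself note that ``alternatively one can simply cite it'', your proposal subsumes the paper's proof and matches the approach of the cited source.
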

\begin{proof} See~\cite{E}.
\end{proof}

\section{Main results}\label{sec main}

Let $A$ be a finite subset of an abelian group with $|A| \ge 2$. If $|hA|$ is known for some $h \ge 2$, what bounds can one derive on $|iA|$ for $i \not=h$? 

\smallskip
We first recall the classical known answer, a direct consequence of Pl\"unnecke's inequality. See e.g.~\cite[Theorem 7.5, p. 217]{N1} or~\cite[Theorem 1.2.3 with $m=1$, p. 96]{R1}. 

\begin{thm}[Plünnecke]\label{thm 7.5} Let $A$ be a nonempty finite subset of an abelian group. Let $h \ge 2$ be an integer. Then $|iA| \ge |hA|^{i/h}$ for all $1 \le i \le h$.
\end{thm}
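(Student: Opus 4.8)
The plan is to derive the stated bound from the graph-theoretic core of Pl\"unnecke's inequality, exactly as in the classical references, rather than from Macaulay's theorem (which is reserved for the sharper results to come). The key object is the \emph{addition graph} attached to $A$. First I would build the layered directed graph $G$ whose vertex set is the disjoint union of the layers $V_0,V_1,\dots,V_h$ with $V_0=\{0\}=0A$ and $V_i=iA$ for $1 \le i \le h$, placing an edge from $v \in V_i$ to $v+a \in V_{i+1}$ for each $a \in A$. This is the prototypical \emph{commutative} graph in Pl\"unnecke's sense: translation by $A$ acts uniformly on every layer, so the forward and backward Menger-type conditions that Pl\"unnecke's method requires are satisfied.

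Next I would invoke the magnification ratios $D_i=\min\{\,|Z_{(i)}|/|Z| : \emptyset \ne Z \subseteq V_0\,\}$, where $Z_{(i)}$ denotes the set of vertices of $V_i$ reachable from $Z$ along directed paths. Since $V_0=\{0\}$ consists of a single vertex, the only admissible $Z$ is $\{0\}$ itself, and every element of $iA$ is reachable from $0$; hence $D_i=|iA|$ for all $i$. The heart of the matter is then Pl\"unnecke's theorem for commutative graphs, which asserts that the sequence $D_i^{1/i}$ is non-increasing, that is $D_1 \ge D_2^{1/2} \ge \cdots \ge D_h^{1/h}$. Substituting $D_i=|iA|$ gives $|iA|^{1/i} \ge |hA|^{1/h}$ for every $1 \le i \le h$, which is exactly $|iA| \ge |hA|^{i/h}$ after raising both sides to the power $i$.

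The one genuinely hard ingredient is the monotonicity of $D_i^{1/i}$ itself: this is where the combinatorial depth resides, resting on Menger's theorem together with the commutativity of the graph to control how the magnification ratios compound across successive layers. I would simply cite it from the literature (e.g.~\cite{N1, R1}), since reproving it lies outside the present scope; alternatively, one could route the argument through Petridis's more recent elementary proof of the Pl\"unnecke--Ruzsa inequality~\cite{Pet}. Everything else — the identification $D_i=|iA|$ forced by the single-vertex bottom layer, and the final algebraic rearrangement — is routine once the addition graph is set up correctly. It is precisely this reliance on the graph-theoretic black box that the remainder of the paper avoids, replacing it by Macaulay's theorem applied to $R(A)$.
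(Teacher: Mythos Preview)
Your proposal is correct and follows precisely the classical graph-theoretic route that the paper invokes: at this point the paper does not give a self-contained proof of Theorem~\ref{thm 7.5} but simply presents it as a known consequence of Pl\"unnecke's inequality, with references to~\cite[Theorem 7.5]{N1} and~\cite[Theorem 1.2.3]{R1}---exactly the sources you cite for the monotonicity of the magnification ratios $D_i^{1/i}$. Your final sentence also correctly anticipates the paper's later alternative derivation via Macaulay's theorem in Corollary~\ref{cor vs}.
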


\begin{rem}\label{equiv} \emph{
Theorem~\ref{thm 7.5} is equivalent to its main case $i=h-1$, namely:
\begin{equation}\label{case i=h-1}
|(h-1)A| \ge |hA|^{(h-1)/h}.
\end{equation}
Indeed, the general case is implied by~\eqref{case i=h-1}, as shown by induction on $h$:
\begin{gather*}
|iA| \stackrel{\eqref{case i=h-1}}{\ge} (|(i+1)A|^{i/(i+1)} 
\stackrel{\textrm{ind.hyp.}}{\ge} (|hA|^{(i+1)/h})^{i/(i+1)} = |hA|^{i/h}. 
\end{gather*}}
\end{rem}
Consequently, in the sequel, we mainly focus on comparing $|hA|$ with $|(h-1)A|$ and/or $|(h+1)A|$. In this spirit, a particular case of Plünnecke's Theorem~\ref{thm 7.5} is the estimate
\begin{equation}\label{upper bound}
|(h+1)A| \le |hA|^{(h+1)/h}
\end{equation}
for all $h \ge 1$.

\medskip

In comparison, here is our first main result, obtained by applying Macaulay's Theorem~\ref{thm macaulay} to the standard graded algebra $R(A)$ defined in Section~\ref{sec R(A)}.

\begin{thm}\label{main thm 1} Let $A$ be a nonempty finite subset of an abelian group $G$. Let $h \ge 1$ be an integer. Then
\begin{equation}\label{our upper bound}
|(h+1)A| \le |hA|^{\vs{h}}.
\end{equation}
\end{thm}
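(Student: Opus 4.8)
The plan is to apply Macaulay's Theorem~\ref{thm macaulay} directly to the standard graded algebra $R(A)$ constructed in Section~\ref{sec R(A)}. The entire content of the statement is really just the translation of the abstract inequality~\eqref{macaulay} into the language of iterated sumsets via the dictionary~\eqref{model}.

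First I would recall that, by the construction in Section~\ref{sec R(A)}, the algebra $R(A) = \oplus_{h \ge 0} R_h$ is a genuine standard graded algebra over $K$: it is generated by the finitely many degree-one elements $t^{a_1}Y, \dots, t^{a_n}Y$, and its graded pieces satisfy $\dim_K R_h = |hA|$ for all $h \ge 0$ by~\eqref{model}. This is the only structural fact needed, and it is already in hand.

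Next, since $R(A)$ is a standard graded algebra, Theorem~\ref{thm macaulay} applies to its Hilbert function $d_i = \dim_K R_i = |iA|$. For the given integer $h \ge 1$, inequality~\eqref{macaulay} with $i = h$ reads $d_{h+1} \le d_h^{\vs{h}}$. Substituting $d_{h+1} = |(h+1)A|$ and $d_h = |hA|$ yields exactly the desired bound
$$
|(h+1)A| \le |hA|^{\vs{h}},
$$
which is~\eqref{our upper bound}.

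Honestly, I do not expect a genuine obstacle here, since all the heavy lifting was front-loaded into the construction of $R(A)$ and into Macaulay's theorem itself. The one point deserving a moment of care is the edge behavior: when $|hA| = 0^{\vs{h}}$ this is trivial, but $A$ being nonempty forces $|hA| \ge 1$, so the binomial-representation operation $d_h^{\vs{h}}$ is well defined and the inequality is meaningful. Beyond that, the proof is essentially a one-line specialization of a result that has already been set up, and the author's proof should read the same way.
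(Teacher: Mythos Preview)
Your proposal is correct and follows exactly the paper's approach: apply Macaulay's Theorem~\ref{thm macaulay} to the standard graded algebra $R(A)$ and translate the Hilbert function inequality via $\dim_K R_h = |hA|$. The paper's proof is the same one-line specialization, without the extra edge-case remark.
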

The strength of Theorem~\ref{main thm 1} is illustrated in Section~\ref{sec example},  with a concrete example showing that~\eqref{our upper bound} may be much sharper than~\eqref{upper bound}. In fact, the improvement of the former over the latter is systematic, as shown by Theorem~\ref{thm vs}, Corollary~\ref{cor vs} and Remark~\ref{rem}. See also a comment in Section~\ref{sec conclusion}.

\begin{proof} Let $R=R(A)$ be the standard graded algebra associated to $A$ as defined in Section~\ref{sec R(A)}. We have $R=\oplus_{h \ge 0}R_h$, where $R_h$ denotes the homogeneous subspace of $R$ of degree $h$. By~\eqref{model} we have 
$$
\dim R_h = |hA|
$$
for all $h \ge 0$. Hence, for $h \ge 1$, a direct application of Theorem~\ref{thm macaulay} yields the claimed upper bound~\eqref{our upper bound}.
\end{proof}

\smallskip
Let us now apply Theorem~\ref{thm condensed}, the condensed version of Macaulay's theorem. We obtain the following more flexible bounds from which we shall derive and improve~\eqref{case i=h-1}.

\begin{thm}\label{main thm 2} Let $A$ be a nonempty finite subset of an abelian group $G$. Let $h \ge 2$ be an integer and $x \ge h$ the unique real number such that $\displaystyle |hA|=\binom{x}{h}$. Then
$$
|(h-1)A| \ge \binom{x-1}{h-1} \,\, \textrm{ and }\,\, |(h+1)A| \le \binom{x+1}{h+1}. 
$$
\end{thm}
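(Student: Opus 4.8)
The plan is to apply Theorem~\ref{thm condensed}, the condensed version of Macaulay's theorem, directly to the standard graded algebra $R=R(A)$ constructed in Section~\ref{sec R(A)}. The bridge between the two theorems is the identity~\eqref{model}, which asserts $\dim_K R_h = |hA|$ for all $h \ge 0$. This identity converts every statement about the Hilbert function $d_i = \dim_K R_i$ of $R$ into a statement about the iterated sumset cardinalities $|iA|$, and vice versa.

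First I would invoke Lemma~\ref{lem binom} to justify the hypothesis: since $|A| \ge 2$ and $h \ge 2$, we have $|hA| \ge 1$, so there is indeed a unique real $x \ge h$ with $|hA| = \binom{x}{h}$. Then I would set $d_i = \dim_K R_i = |iA|$ and observe that $d_h = |hA| = \binom{x}{h}$, so the real number $x$ playing the role in Theorem~\ref{thm condensed} is exactly the one in the statement. Applying Theorem~\ref{thm condensed} with this $h$ and $x$ immediately yields
$$
d_{h-1} \ge \binom{x-1}{h-1} \quad\text{and}\quad d_{h+1} \le \binom{x+1}{h+1}.
$$
Translating back through $d_{h-1} = |(h-1)A|$ and $d_{h+1} = |(h+1)A|$ gives precisely the two claimed inequalities, completing the argument.

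There is essentially no hard step here, since the real content has already been packaged into Theorem~\ref{thm condensed}; the proof is a clean specialization of that result to the algebra $R(A)$. The only point requiring mild care is the hypothesis on $x$: Theorem~\ref{thm condensed} is stated for the unique $x \ge h-1$ with $d_h = \binom{x}{h}$, whereas the present statement asserts $x \ge h$. I would reconcile this by noting that Lemma~\ref{lem binom} sharpens the bound from $x \ge h-1$ to $x \ge h$ exactly because $d_h = |hA| \ge 1$, so the two formulations agree and no separate verification is needed. Thus the entire proof reduces to citing~\eqref{model}, Lemma~\ref{lem binom}, and Theorem~\ref{thm condensed} in sequence.
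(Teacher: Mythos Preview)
Your proposal is correct and follows exactly the paper's own approach: apply Theorem~\ref{thm condensed} to the algebra $R(A)$, using~\eqref{model} to translate between $\dim_K R_i$ and $|iA|$. One tiny slip: the hypothesis is only that $A$ is nonempty, not $|A|\ge 2$, but this is harmless since $|hA|\ge 1$ already follows from nonemptiness, which is all Lemma~\ref{lem binom} needs.
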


\begin{proof} As above, let $R=R(A)$ be the standard graded algebra associated to $A$ with its decomposition $R=\oplus_{h \ge 0}R_h$ into the direct sum of its homogeneous subspaces of given degree, where $\dim R_h=|hA|$ for all $h \ge 0$. The claimed bounds follow from Theorem~\ref{thm condensed} applied to $R(A)$.
\end{proof}

\begin{rem} \emph{While more handy, the upper bound in Theorem~\ref{main thm 2} is slightly weaker than in Theorem~\ref{main thm 1}. Indeed, still with $x \ge h$ such that $|hA|=\binom{x}{h}$, we have
$$\displaystyle |hA|^{\vs{h}} \le \binom{x+1}{h+1}.$$
This follows from~\cite[Theorem 5.9]{E}.}
\end{rem}

Given $|hA|$, the lower bound on $|(h-1)A|$ provided by Theorem~\ref{main thm 2} may be up to $2.71$ times better, in suitable circumstances, than the one provided in~\eqref{case i=h-1} by Plünnecke's inequality. This will be shown in Sections~\ref{sec vs} and~\ref{sec theta}. We start with a concrete example demonstrating the strength of Theorems~\ref{main thm 1} and~\ref{main thm 2}.

\subsection{An example: the case $|5A|=100$}\label{sec example}

Let $A$ be a subset of an abelian group such that $|5A|=100$. The Plünnecke-based bounds given by~\eqref{case i=h-1}, namely $|4A| \ge 100^{4/5}$ and $|6A| \le 100^{6/5}$,  yield
$$
|4A| \ge 40, \ \ |6A| \le 251.
$$
In comparison, based on the condensed version of Macaulay's theorem, Theorem~\ref{main thm 2} yields the much sharper bounds
\begin{equation}\label{ex num}
|4A| \ge 58, \quad |6A| \le 161.
\end{equation}
Indeed, let $x \ge 5$ be the unique real number such that $\binom{x}{5}=100$. Then $8.69 < x < 8.7$, as follows from $\binom{8.69}{5} \approx 99.42$ and $\binom{8.7}{5} \approx 100.2$. Hence
\begin{eqnarray*}
& & |4A| \ge \binom{x-1}{4} > \binom{7.69}{4} \approx 57.2, \\
& &  |6A| \le \binom{x+1}{6} < \binom{9.7}{6} \approx 161.99.
\end{eqnarray*}
This proves \eqref{ex num}. Theorem~\ref{main thm 1}, based on the full version of Macaulay's theorem, yields even better bounds. 
\begin{prop}\label{ex 5A 100} Let $A$ be a subset of an abelian group such that $|5A|=100$. Then
\begin{equation}\label{ex num +}
|4A| \ge 61, \quad |6A| \le 152.
\end{equation}
\end{prop}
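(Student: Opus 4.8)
The plan is to apply Theorem~\ref{main thm 1} directly, working with the integer-valued operation $a \mapsto a^{\vs{i}}$ rather than the real-analytic binomial function, which is what gives the sharper constants. We are given $|5A| = 100$ and want to prove $|4A| \ge 61$ and $|6A| \le 152$. For the upper bound on $|6A|$, Theorem~\ref{main thm 1} gives $|6A| \le |5A|^{\vs{5}} = 100^{\vs{5}}$, so the whole task reduces to computing this one quantity. First I would write down the $5$th binomial representation of $100$: find the largest $a_5$ with $\binom{a_5}{5} \le 100$, then recurse on the remainder. Once $100 = \sum_{j} \binom{a_j}{j}$ is in hand, $100^{\vs{5}} = \sum_j \binom{a_j+1}{j+1}$ is an immediate substitution, and I expect the result to be exactly $152$.

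The lower bound on $|4A|$ is the genuinely interesting direction, because Theorem~\ref{main thm 1} as stated bounds $d_{i+1}$ \emph{from above} in terms of $d_i$, i.e.\ it bounds a later term by an earlier one, whereas here I need to constrain the \emph{earlier} term $|4A|$ from the later term $|5A|$. The key observation is that the Macaulay inequality $|5A| \le |4A|^{\vs{4}}$ (this is~\eqref{our upper bound} with $h=4$) can be \emph{inverted}: since $a \mapsto a^{\vs{4}}$ is nondecreasing, the constraint $|4A|^{\vs{4}} \ge 100$ forces $|4A|$ to be at least the smallest integer $m$ with $m^{\vs{4}} \ge 100$. So the plan is to compute $m^{\vs{4}}$ for candidate values of $m$ near the Pl\"unnecke estimate $40$ and locate the threshold. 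Concretely, I would verify that $60^{\vs{4}} < 100$ while $61^{\vs{4}} \ge 100$, which pins down $|4A| \ge 61$. Each $m^{\vs{4}}$ is computed by first taking the $4$th binomial representation of $m$ and then applying the $\vs{\cdot}$ operation.

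The main obstacle, such as it is, is purely the bookkeeping of the binomial representations: for the inversion step one must be careful that the monotonicity of $a \mapsto a^{\vs{i}}$ is genuine and that the threshold is located correctly, since a single off-by-one error in the binomial expansion changes the final constant. I would therefore double-check the two boundary computations $60^{\vs{4}}$ and $61^{\vs{4}}$ explicitly, and similarly confirm $100^{\vs{5}} = 152$ by re-deriving the representation of $100$ from scratch. No deeper idea is needed beyond combining Theorem~\ref{main thm 1} in its two instances $h = 4$ (inverted) and $h = 5$ (direct) with the monotonicity of the Macaulay operator guaranteed by Lemma~\ref{Onto}; the improvement over the condensed bounds in~\eqref{ex num} comes precisely from using the exact integer operation $\vs{\cdot}$ instead of its real-variable relaxation through $\binom{x}{h}$.
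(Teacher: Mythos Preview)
Your approach is correct and essentially identical to the paper's: compute $100^{\vs{5}}=152$ directly from the $5$th binomial representation $100=\binom{8}{5}+\binom{7}{4}+\binom{4}{3}+\binom{3}{2}+\binom{2}{1}$, and for the lower bound argue (your ``inversion'' is the paper's contrapositive) that $|4A|\le 60$ would force $|5A|\le 60^{\vs{4}}=98<100$. One small note: the monotonicity of the integer map $a\mapsto a^{\vs{i}}$ that you invoke is indeed standard and easy, but Lemma~\ref{Onto} concerns the real-variable map $y\mapsto\binom{y}{i}$ and does not state it directly.
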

\begin{proof} The $5$th binomial representation of $100$ is given by
$$
100 = \binom{8}{5}+\binom{7}{4}+\binom{4}{3}+\binom{3}{2}+\binom{2}{1}.
$$
The inequality $|(h+1)A| \le |hA|^{\vs{h}}$ of Theorem~\ref{main thm 1} then yields~\eqref{ex num +}. More precisely, we have
\begin{eqnarray}
|4A| & \ge & \binom{7}{4}+\binom{6}{3}+\binom{4}{2} \, = \, 61, \label{4A} \\
|6A| & \le & \binom{9}{6}+\binom{8}{5}+\binom{5}{4}+\binom{4}{3}+\binom{3}{2} \, = \, 152. \label{6A}
\end{eqnarray}
Thus inequality~\eqref{6A} directly follows from Theorem~\ref{main thm 1}. As for~\eqref{4A}, if we had 
$\displaystyle |4A| \le 60 = \binom{7}{4}+\binom{6}{3}+\binom{3}{2}+\binom{2}{1},$
then that same theorem would imply 
$\displaystyle
|5A| \le \binom{8}{5}+\binom{7}{4}+\binom{4}{3}+\binom{3}{2}=100-2$, contrary to the hypothesis.
\end{proof}

\smallskip
Is Proposition~\ref{ex 5A 100} best possible for sets satisfying $|5A|=100$? Strong evidence shows that it is not far from it. For instance, let $A = \{0,1,5,8,49\} \subset \Z$. Then $|5A| = 100$ as required, and
$$
|4A| = 63, \quad
|6A| = 145.
$$
We conjecture that these bounds are optimal for sets of integers.

\begin{conj}\label{conj 100} Let  $A \subset \Z$ be any subset satisfying $|5A|=100$. Then $|4A| \ge 63$ and $|6A| \le 145$.
\end{conj}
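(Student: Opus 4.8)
The plan is to exploit the total order of $\Z$, because the purely algebraic bounds are provably too weak here: applying Macaulay's inequality~\eqref{macaulay} to $R(A)$ only forbids $|4A| \le 60$ and only forces $|6A| \le 152$. Indeed $61 = \binom 74+\binom 63+\binom 42$ gives $61^{\vs 4}=\binom 85+\binom 74+\binom 53=101$, and similarly $62^{\vs 4}=102$, so Macaulay permits $|4A|\in\{61,62\}$ together with $|5A|=100$. Thus any proof of the sharper values $63$ and $145$ must use that $A\subset\Z$, not merely that $R(A)$ is a standard graded algebra. As a first step I would normalize: translating so that $\min A=0$ and dividing out $\gcd(A)$ leaves every $|hA|$ unchanged, so assume $0=a_1<\dots<a_n=d$ with $\gcd=1$. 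Since $0\in A$ the sumsets are nested and each $hA\subseteq\{0,\dots,hd\}$, giving the elementary integer constraints $h(n-1)+1\le|hA|\le hd+1$. From $|5A|=100$ these yield $n\le 20$, $d\ge 20$, while $\binom{n+4}{5}\ge 100$ yields $n\ge 5$, so $n\in\{5,\dots,20\}$.

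A crucial feature, which rules out a naive finite search, is that the diameter $d$ is \emph{not} bounded: there are infinitely many integer sets with $|5A|=100$. If $A=B\cup\{N\}$ with $N$ larger than $\max(hB)$, then $hA=\bigsqcup_{k=0}^{h}\bigl(kN+(h-k)B\bigr)$ is a disjoint union, so $|hA|=\sum_{j=0}^{h}|jB|$ \emph{stabilizes}, depending only on the smaller base set $B$. For the conjectured extremal example $A=\{0,1,5,8,49\}$ one has $B=\{0,1,5,8\}$ with $|jB|=1,4,10,19,29,37,45$ for $j=0,\dots,6$, and the identity above recovers $|4A|=63$, $|5A|=100$, $|6A|=145$ while allowing $N$ to be pushed arbitrarily far. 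Hence the right reduction is not a diameter bound but a structural, Freiman-type decomposition of spread-out sets into separated clusters, each contributing a convolution factor to the sequence $|hA|$.

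From here my strategy would split by $|A|$. For large $n$ the bounds are not binding: the arithmetic-progression estimate $|4A|\ge 4(n-1)+1$ already exceeds $63$ for $n\ge 17$, and forcing $|5A|=100$ with many points pushes $A$ close to an arithmetic progression, for which $|4A|\approx 4(n-1)+1$ and $|6A|\approx 6(n-1)+1$ stay comfortably inside the window; careful Kneser-type lower and upper bounds should dispatch the whole range $n\ge 6$. The genuinely extremal regime is $n=5$ (and possibly $n=6$), where $A$ is spread out and nearly a $B_5$-set. There I would attempt an induction on cardinality via the cluster decomposition: in the fully separated regime the problem reduces to minimizing $\sum_{j=0}^{4}|jB|$ and maximizing $\sum_{j=0}^{6}|jB|$ over $4$-element sets $B$ with $\sum_{j=0}^{5}|jB|=100$, which is a strictly smaller instance amenable to the same circle of ideas, with the Macaulay bounds of Theorems~\ref{main thm 1} and~\ref{main thm 2} serving as a backstop on each factor.

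The main obstacle is the \emph{intermediate} regime, where the far point $N$ is not large enough for the blocks to separate and the clusters partially overlap; here neither the clean stabilization identity nor the crude integer bounds apply, and one must show that no such partial overlap beats the block-separated example. Establishing this amounts to a complete structural classification of integer sets with $|5A|=100$, and it is precisely this that keeps the statement a conjecture. In effect the two small gaps to be closed—excluding $|4A|\in\{61,62\}$ and lowering $152$ to $145$ for $|6A|$—demand an integer-specific strengthening of Macaulay's inequality for the algebra $R(A)$, which for $A\subset\Z$ is the homogeneous coordinate ring of a projective monomial curve. I expect essentially all the difficulty to lie in proving such a refined growth bound $|(h+1)A|\le f(|hA|)$ for these one-dimensional toric algebras, with $f$ strictly below $d\mapsto d^{\vs h}$, rather than in the bookkeeping of the case split.
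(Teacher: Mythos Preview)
The statement you are addressing is Conjecture~\ref{conj 100}, and the paper offers \emph{no proof} of it whatsoever: it is explicitly labelled a conjecture, supported only by the single example $A=\{0,1,5,8,49\}$ witnessing $|4A|=63$ and $|6A|=145$, together with the Macaulay bounds $|4A|\ge 61$ and $|6A|\le 152$ from Proposition~\ref{ex 5A 100}. There is therefore nothing in the paper to compare your proposal against.

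Your write-up is not a proof either, and to your credit you say so plainly: you identify that Macaulay's inequality alone cannot close the gaps $61\to 63$ and $152\to 145$, you sketch a cluster/stabilization reduction for spread-out sets, and you correctly isolate the intermediate-overlap regime as the obstruction. That is a reasonable heuristic outline, but it remains programmatic. In particular, the claim that ``careful Kneser-type lower and upper bounds should dispatch the whole range $n\ge 6$'' is asserted without any supporting argument, and the proposed induction on cardinality via separated clusters is only described in the fully separated limit, not carried out. So the proposal is an honest assessment that the conjecture is open together with a plausible line of attack; it neither proves the statement nor conflicts with anything in the paper.
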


As seen here, the improvement provided by Theorem~\ref{main thm 2} is already quite good. How good is it in general? We investigate this question in the sequel.

\subsection{Macaulay vs Pl\"unnecke}\label{sec vs}
As our next main result, we show that Pl\"unnecke's Theorem~\ref{thm 7.5} also follows from our Macaulay-based Theorem~\ref{main thm 2}, and we significantly strengthen it by a multiplicative factor which may exceed $2.71$ in suitable circumstances.

\begin{notation}\label{notation theta} For a positive integer $h$ and a real number $x \ge h$, we set
$$
\theta(x,h) = \frac hx \binom{x}{h}^{\hspace{-1mm} 1/h}.
$$
\end{notation}

\begin{thm}\label{thm vs} Let $A$ be a nonempty finite subset of an abelian group $G$. Let $h \ge 2$ be an integer. Then
$$
|(h-1)A| \ge \theta(x,h)\:  |hA|^{(h-1)/h},
$$
where $x \in \bR$ satisfies $x \ge h$ and $\displaystyle |hA|=\binom{x}{h}$.  
\end{thm}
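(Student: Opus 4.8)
The plan is to reduce Theorem~\ref{thm vs} to the lower bound already established in Theorem~\ref{main thm 2}, namely $|(h-1)A| \ge \binom{x-1}{h-1}$, and then to show that this quantity equals exactly $\theta(x,h)\,|hA|^{(h-1)/h}$. So the entire content of the proof is an algebraic identity, not a fresh combinatorial or commutative-algebra argument. First I would invoke Lemma~\ref{lem binom} to fix the unique real $x \ge h$ with $|hA| = \binom{x}{h}$, which is exactly the $x$ appearing in the statement, and record from Theorem~\ref{main thm 2} the inequality $|(h-1)A| \ge \binom{x-1}{h-1}$.

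Next I would manipulate $\binom{x-1}{h-1}$ into the desired shape. The key observation is the factorial-style identity
\begin{equation}\label{binom-id}
\binom{x-1}{h-1} = \frac hx \binom{x}{h},
\end{equation}
which follows immediately from the product formula $\binom{x}{h} = \frac{x}{h}\,\frac{x-1}{h-1}\cdots\frac{x-h+1}{1}$: stripping off the leading factor $x/h$ leaves precisely $\binom{x-1}{h-1}$. Using \eqref{binom-id} together with $|hA| = \binom{x}{h}$, I would write
$$
\binom{x-1}{h-1} = \frac hx \binom{x}{h} = \frac hx \binom{x}{h}^{1/h}\binom{x}{h}^{(h-1)/h} = \theta(x,h)\,|hA|^{(h-1)/h},
$$
where the last equality uses Notation~\ref{notation theta} for $\theta(x,h)$ and $\binom{x}{h}^{(h-1)/h} = |hA|^{(h-1)/h}$. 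Chaining this with the Theorem~\ref{main thm 2} bound gives the claimed inequality.

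Since both ingredients are already in hand, there is essentially no obstacle; the proof is a matter of bookkeeping. The one point demanding a little care is that $\theta(x,h)$ is defined (Notation~\ref{notation theta}) only for $x \ge h$, so I must confirm that the $x$ produced by Lemma~\ref{lem binom} indeed satisfies $x \ge h$ — which it does by construction, since $|hA| \ge 1$ forces $x \ge h$. A second, purely expository, point worth flagging is that this computation also recovers Pl\"unnecke's bound \eqref{case i=h-1}: one still needs $\theta(x,h) > 1$ to claim a genuine \emph{strengthening}, but that positivity statement belongs to the subsequent numerical analysis of $\theta$ in Section~\ref{sec theta} and is not needed to prove the identity-based inequality here. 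I would therefore keep the proof of Theorem~\ref{thm vs} confined to establishing the displayed inequality, deferring any discussion of the size of $\theta(x,h)$.
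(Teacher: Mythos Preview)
Your proposal is correct and follows essentially the same approach as the paper: invoke Theorem~\ref{main thm 2} for the bound $|(h-1)A|\ge\binom{x-1}{h-1}$, use the identity $\binom{x-1}{h-1}=\frac hx\binom xh$, and rewrite in terms of $\theta(x,h)$ and $|hA|^{(h-1)/h}$. The only cosmetic difference is that the paper raises both sides to the $h$th power before manipulating, whereas you split $\binom xh=\binom xh^{1/h}\binom xh^{(h-1)/h}$ directly; the content is identical.
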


\begin{proof} Theorem~\ref{main thm 2} yields
\begin{equation}\label{our thm}
|(h-1)A| \ge \binom {x-1}{h-1}.
\end{equation}
Now
\begin{equation}\label{rel binom}
\binom{x-1}{h-1} = \frac hx \binom{x}{h}
\end{equation}
since
$$
\binom{x}{h} = \prod_{i=0}^{h-1} \frac{x-i}{h-i} 
 =  \frac xh \prod_{i=1}^{h-1} \frac{x-i}{h-i}
 =  \frac xh \binom{x-1}{h-1}.
$$
Hence
\begin{eqnarray*}
|(h-1)A|^h & \ge & \binom {x-1}{h-1}^{\hspace{-1mm} h} \\
& = & \left(\frac hx\right)^{\hspace{-1mm} h} \binom{x}{h}^{\hspace{-1mm} h} \\
& =& \left(\frac hx\right)^{\hspace{-1mm} h} \binom{x}{h} \binom{x}{h}^{\hspace{-1mm} h-1} \\
& = & \left(\frac hx\right)^{\hspace{-1mm} h} \binom{x}{h} |hA|^{h-1}.
\end{eqnarray*}
Therefore $|(h-1)A|^h \ge \theta(x,h)^h  |hA|^{h-1}$, as desired.
\end{proof}

\begin{cor}\label{cor vs} Theorem~\ref{main thm 2} implies Plünnecke's Theorem~\ref{thm 7.5}.
\end{cor}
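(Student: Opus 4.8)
The plan is to show that Theorem~\ref{thm vs}, itself already derived from Theorem~\ref{main thm 2}, contains Pl\"unnecke's estimate as soon as we verify that the multiplicative factor $\theta(x,h)$ is never smaller than $1$. By Remark~\ref{equiv}, Pl\"unnecke's Theorem~\ref{thm 7.5} is equivalent to its main case~\eqref{case i=h-1}, so it suffices to recover $|(h-1)A| \ge |hA|^{(h-1)/h}$ for every $h \ge 2$. Since Theorem~\ref{thm vs} gives $|(h-1)A| \ge \theta(x,h)\,|hA|^{(h-1)/h}$ with $x \ge h$ and $|hA|=\binom{x}{h}$, the entire task reduces to the single inequality $\theta(x,h) \ge 1$.

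First I would unwind the definition of $\theta$. Because $\theta(x,h)=\frac hx\binom{x}{h}^{1/h}$, the inequality $\theta(x,h)\ge 1$ is equivalent to $\binom{x}{h}\ge (x/h)^h$. I would prove this from the product formula $\binom{x}{h}=\prod_{i=0}^{h-1}\frac{x-i}{h-i}$ by comparing it termwise with $(x/h)^h=\prod_{i=0}^{h-1}\frac{x}{h}$.

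The key step --- and the only place any genuine content sits --- is the elementary claim that $\frac{x-i}{h-i}\ge \frac{x}{h}$ for each $0\le i\le h-1$ whenever $x\ge h$. Cross-multiplying (both denominators are positive since $i\le h-1$), this reduces to $h(x-i)\ge x(h-i)$, i.e.\ to $xi\ge hi$, which holds because $x\ge h$ and $i\ge 0$. Taking the product over $i=0,\dots,h-1$ then yields $\binom{x}{h}\ge (x/h)^h$, hence $\theta(x,h)\ge 1$.

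Finally I would assemble the pieces: feeding $\theta(x,h)\ge 1$ into Theorem~\ref{thm vs} recovers~\eqref{case i=h-1}, and Remark~\ref{equiv} upgrades this to the full statement $|iA|\ge |hA|^{i/h}$ for all $1\le i\le h$, which is Theorem~\ref{thm 7.5}. I do not expect a real obstacle here; the one point requiring a moment's care is confirming that each denominator $h-i$ stays positive over the relevant range of $i$, so that the termwise comparison is legitimate. Equality holds exactly when $x=h$, i.e.\ when $|hA|=1$, so the bound is strict as soon as $|A|\ge 2$, consistent with the strengthening~\eqref{strengthening}.
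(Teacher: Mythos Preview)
Your proposal is correct and follows essentially the same approach as the paper's own proof. Both reduce the task, via Theorem~\ref{thm vs}, to showing $\theta(x,h)\ge 1$, and both establish this by writing $\theta(x,h)^h$ (equivalently $\binom{x}{h}$ versus $(x/h)^h$) as a product and verifying the termwise inequality $h(x-i)\ge x(h-i)$ for $0\le i\le h-1$ from $x\ge h$; you are simply a bit more explicit in invoking Remark~\ref{equiv} to pass from the case $i=h-1$ to the full Theorem~\ref{thm 7.5}.
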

\begin{proof}
By Theorem~\ref{thm vs}, we only need to show $\theta(x,h) \ge 1$, or equivalently, $\theta(x,h)^h \ge 1$. Now
\begin{equation}\label{formula theta}
\theta(x,h)^h=\left(\frac hx\right)^{\hspace{-1mm} h} \binom{x}{h}  =  \prod_{i=0}^{h-1} \frac{h(x-i)}{x(h-i)},
\end{equation}
and $h(x-i) \ge x(h-i)$  for all $0 \le i \le h-1$ since $h \le x$. 
\end{proof}

\begin{rem}\label{rem} \emph{In fact, we have $\theta(x,h) > 1$ whenever $h \ge 2$ and $|hA| \ge 2$. Inded, since $|hA|=\binom{x}{h}$ with $x \ge h$, it follows that $x > h$, whence $h(x-1)>x(h-1)$, implying in turn $\theta(x,h)^h> 1$ by \eqref{formula theta}.}
\end{rem}

\smallskip
We close this section with an equivalent formulation of Theorem~\ref{thm vs}. It provides a nice inequality between $|(h-1)A|$ and $|hA|$, yet less suited to comparison purposes with Plünnecke's inequality.

\begin{thm}\label{frac} Let $A$ be a nonempty finite subset of an abelian group $G$. Let $h \ge 2$ be an integer. Then
$$
|(h-1)A| \ge \frac hx\: |hA|,
$$
where $x \in \bR$ satisfies $x \ge h$ and $\displaystyle |hA|=\binom{x}{h}$.  
\end{thm}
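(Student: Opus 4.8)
The plan is to derive Theorem~\ref{frac} directly from Theorem~\ref{main thm 2} together with the binomial identity already established inside the proof of Theorem~\ref{thm vs}. The key observation is that the lower bound $|(h-1)A| \ge \binom{x-1}{h-1}$ from Theorem~\ref{main thm 2} can be rewritten in a much simpler closed form. First I would invoke Theorem~\ref{main thm 2}, which gives
\begin{equation*}
|(h-1)A| \ge \binom{x-1}{h-1},
\end{equation*}
where $x \ge h$ is the unique real number satisfying $|hA| = \binom{x}{h}$.

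Next I would apply the identity \eqref{rel binom} proved within Theorem~\ref{thm vs}, namely
\begin{equation*}
\binom{x-1}{h-1} = \frac hx \binom{x}{h}.
\end{equation*}
This identity follows from factoring out the $i=0$ term in the product expansion of $\binom{x}{h}$, so no new computation is required. Since $\binom{x}{h} = |hA|$ by hypothesis, combining the two displays immediately yields
\begin{equation*}
|(h-1)A| \ge \frac hx \binom{x}{h} = \frac hx\, |hA|,
\end{equation*}
which is exactly the claimed inequality.

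I do not anticipate any genuine obstacle, as this statement is labeled an equivalent reformulation and the heavy lifting has already been done: Theorem~\ref{main thm 2} supplies the bound and equation~\eqref{rel binom} supplies the simplification. The only point requiring mild care is bookkeeping around the hypothesis on $x$: one must confirm that the $x$ produced by Lemma~\ref{lem binom} (the unique real $x \ge h$ with $|hA| = \binom{x}{h}$) is the same $x$ appearing in both Theorem~\ref{main thm 2} and the identity, so that substituting $\binom{x}{h} = |hA|$ is legitimate. This is automatic here since all three statements fix $x$ by the same defining equation. Thus the proof reduces to citing Theorem~\ref{main thm 2}, applying the already-proved relation \eqref{rel binom}, and substituting $\binom{x}{h} = |hA|$.
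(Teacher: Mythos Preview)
Your proposal is correct and coincides with the paper's own argument: the paper explicitly offers the alternative route ``directly follows from Theorem~\ref{main thm 2} and formula~\eqref{rel binom},'' which is precisely what you do. The only difference is cosmetic---the paper's primary phrasing passes through Theorem~\ref{thm vs} and the definition of $\theta(x,h)$, but both derivations amount to the same one-line substitution.
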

\begin{proof} Directly follows from Theorem~\ref{thm vs} and the formulas 
\begin{eqnarray*}
\theta(x,h) & = & \frac hx \binom{x}{h}^{\hspace{-1mm} 1/h} \\
& = &  \frac hx \: |hA|^{(h-1)/h}.
\end{eqnarray*}
Alternatively, directly follows from Theorem~\ref{main thm 2} and formula~\eqref{rel binom}.
\end{proof}

\begin{cor} Let $A$ be a nonempty finite subset of an abelian group $G$. For all $i \ge 1$, let $x_i \in \bR$ satisfy $x_i \ge i$ and $\binom{x_i}{i}=|iA|$. Let $h \ge 2$ be an integer. For all $1 \le i \le h-1$, we have
$$
|hA| \le \left(\prod_{j=i+1}^h x_j/j\right)|iA|.
$$
\end{cor}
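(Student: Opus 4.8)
The plan is to derive the chain of bounds by iterating the single-step inequality from Theorem~\ref{frac}. That theorem tells us that for each degree $j \ge 2$, writing $x_j \ge j$ for the unique real number with $\binom{x_j}{j} = |jA|$, we have
$$
|(j-1)A| \ge \frac{j}{x_j}\: |jA|,
$$
which rearranges to the upper bound $|jA| \le (x_j/j)\,|(j-1)A|$. This is exactly the elementary brick we want to stack.

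First I would fix $i$ with $1 \le i \le h-1$ and apply this rearranged inequality once at each level $j = i+1, i+2, \dots, h$, telescoping downward:
\begin{align*}
|hA| &\le \frac{x_h}{h}\, |(h-1)A| \\
&\le \frac{x_h}{h}\cdot\frac{x_{h-1}}{h-1}\, |(h-2)A| \\
&\le \cdots \le \left(\prod_{j=i+1}^{h} \frac{x_j}{j}\right) |iA|,
\end{align*}
which is the claimed estimate. Formally this is a finite descending induction on the number of steps (equivalently, induction on $h-i$), with the base case $h-i=1$ being precisely the rearranged Theorem~\ref{frac} at level $j=h$, and the inductive step consisting of one more application at the next lower level.

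The only point that needs a moment's care is that Theorem~\ref{frac} must legitimately apply at every intermediate degree $j$ in the range $i+1 \le j \le h$. Its hypotheses require $j \ge 2$ and the existence of the real $x_j \ge j$ with $|jA| = \binom{x_j}{j}$; the former holds because $j \ge i+1 \ge 2$, and the latter is guaranteed for every $j \ge 1$ by Lemma~\ref{lem binom} (note $|jA| \ge 1$ since $A$ is nonempty), so the $x_j$ appearing in the statement are exactly the ones produced by that lemma. I expect no genuine obstacle here: each factor $x_j/j$ is positive, so the inequalities compose without sign issues, and no monotonicity or cancellation beyond the trivial telescoping is required. The main (very minor) thing to verify is simply that the degrees $|(j-1)A|$ linking consecutive steps match up, which they do by construction.
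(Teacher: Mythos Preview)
Your proposal is correct and follows exactly the approach the paper intends: the paper's own proof is simply ``Straightforward consequence of the above theorem,'' referring to Theorem~\ref{frac}, and your argument spells out precisely this telescoping iteration.
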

\begin{proof} Straightforward consequence of the above theorem.
\end{proof}

\section{Behavior of $\theta(x,h)$}\label{sec theta}

We now study the numerical behavior of the function $\theta(x,h)$. Denote $e \approx 2.718$, the basis of the natural logarithm. We show that $1 < \theta(x,h) < e$ whenever $x > h \ge 2$, and that $\theta(x,h)$ asymptotically tends to $e$ in suitable circumstances. This section is slightly more informal in nature. Numerical computations and graphics were done with \emph{Mathematica\:10}~\cite{W}.

\begin{prop} For all $h \in \bN$, $x \in \bR$ such that $x > h \ge 2$, we have
$$
1 < \theta(x,h) < e.
$$
\end{prop}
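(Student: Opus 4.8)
The plan is to prove the two inequalities $\theta(x,h) > 1$ and $\theta(x,h) < e$ separately, working throughout with the $h$th power $\theta(x,h)^h$ to avoid fractional exponents. The lower bound is essentially already in hand: by formula~\eqref{formula theta} we have
$$
\theta(x,h)^h = \prod_{i=0}^{h-1} \frac{h(x-i)}{x(h-i)},
$$
and since $x > h \ge 2$ each factor with $i \ge 1$ strictly exceeds $1$ (because $h(x-i) > x(h-i)$ is equivalent to $xi > hi$, i.e. $x > h$), while the $i=0$ factor equals $1$. Hence the product is strictly greater than $1$, giving $\theta(x,h) > 1$. This reproduces Remark~\ref{rem} and disposes of the easy half.

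For the upper bound $\theta(x,h) < e$, the natural approach is to take logarithms and bound each factor. From the product formula,
$$
\log\!\big(\theta(x,h)^h\big) = \sum_{i=0}^{h-1} \log\!\left(\frac{h(x-i)}{x(h-i)}\right) = \sum_{i=0}^{h-1} \log\!\left(1 + \frac{i(x-h)}{x(h-i)}\right).
$$
Applying the elementary inequality $\log(1+u) \le u$ to each term reduces the problem to bounding
$$
\sum_{i=0}^{h-1} \frac{i(x-h)}{x(h-i)} = \frac{x-h}{x}\sum_{i=0}^{h-1}\frac{i}{h-i}.
$$
Writing $i = h - (h-i)$ gives $\sum_{i=0}^{h-1}\frac{i}{h-i} = \sum_{k=1}^{h}\frac{h-k}{k} = h\,H_h - h$, where $H_h$ is the $h$th harmonic number. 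So I would aim to show $\frac{x-h}{x}(h H_h - h) < h$, equivalently $(x-h)(H_h - 1) < x$. The hard part is that this crude bound is not quite strong enough in all ranges, since $H_h$ grows like $\log h$ and $\frac{x-h}{x}$ can be close to $1$ when $x \gg h$; one must exploit that $\frac{x-h}{x} < 1$ strictly together with the precise size of $H_h$, or instead bound $\log(\theta^h)$ by comparison with an integral.

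A cleaner route, which I expect to be the main workhorse, is to bound $\theta(x,h)^h = \big(\tfrac hx\big)^h\binom xh$ directly using $\binom xh \le \frac{x^h}{h!}$ (valid since $\binom xh = \frac{1}{h!}\prod_{i=0}^{h-1}(x-i) \le \frac{x^h}{h!}$ for $x > h-1$). This yields
$$
\theta(x,h)^h = \left(\frac hx\right)^{\! h}\binom xh \le \left(\frac hx\right)^{\! h}\frac{x^h}{h!} = \frac{h^h}{h!}.
$$
The inequality $\frac{h^h}{h!} < e^h$ is the classical estimate following from $e^h = \sum_{k\ge 0} h^k/k! > h^h/h!$, which holds for every $h \ge 1$. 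Taking $h$th roots gives $\theta(x,h) \le \big(\tfrac{h^h}{h!}\big)^{1/h} < e$, with the first inequality strict as soon as some factor $x - i$ is strictly less than $x$, i.e. for $h \ge 2$. The main obstacle is making sure the strictness is genuine rather than asymptotic: the bound $\binom xh \le x^h/h!$ is strict precisely because the factors $x-1,\dots,x-(h-1)$ are strictly smaller than $x$ when $h \ge 2$, so the chain of inequalities is strict and $\theta(x,h) < e$ follows. This also transparently explains the asymptotic claim that $\theta$ approaches $e$, since the gap in $\binom xh \le x^h/h!$ closes as $x \to \infty$ while $\frac{h^h}{h!} \to e^h$ from below with $h$.
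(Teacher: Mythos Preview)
Your proposal is correct and, once you settle on the ``cleaner route,'' it is essentially identical to the paper's proof: the lower bound is exactly Remark~\ref{rem} via the product formula~\eqref{formula theta}, and the upper bound is obtained from $\binom xh \le x^h/h!$ together with $h^h/h! < e^h$ from the exponential series. The logarithmic detour via $\log(1+u)\le u$ and harmonic numbers is unnecessary (and, as you correctly note, not strong enough), so you can simply drop that paragraph; also, strictness of $\theta(x,h)<e$ already follows from $h^h/h!<e^h$ alone, without needing the strictness of $\binom xh < x^h/h!$.
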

\begin{proof}
The lower bound follows from \eqref{formula theta} and Remark~\ref{rem}. As for the upper bound, we have
$$
\binom xh  \le  \frac {x^h}{h!} 
 =  \frac {x^h}{h^h}  \frac {h^h}{h!} 
 <  \frac {x^h}{h^h} e^h
$$
since $\displaystyle \frac {h^h}{h!} < \sum_{k \in \bN} \frac {h^k}{k!}=e^h$. It follows that
$$
\theta(x,h) = \frac hx \binom xh^{1/h} < \frac hx \frac xh e = e. \qedhere 
$$
\end{proof}

We shall also need to invoke the monotonicity of $\theta(x,h)$ in $x$.

\begin{prop}\label{prop increasing} For a fixed integer $h \ge 2$, the map $x \mapsto \theta(x,h)$ from $[h,\infty)$ to $[1, \infty)$ is strictly increasing.
\end{prop}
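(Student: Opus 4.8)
The goal is to show that for fixed $h \ge 2$, the function
$$
\theta(x,h) = \frac hx \binom{x}{h}^{1/h}
$$
is strictly increasing on $[h,\infty)$. The cleanest route is to look at $\theta(x,h)^h$, since raising to the $h$th power is monotone on the positive reals, so it suffices to prove that $x \mapsto \theta(x,h)^h$ is strictly increasing. Using formula~\eqref{formula theta} from the proof of Corollary~\ref{cor vs}, we have the product expression
$$
\theta(x,h)^h = \prod_{i=0}^{h-1} \frac{h(x-i)}{x(h-i)} = \left(\frac{h}{x}\right)^{\!h-1}\prod_{i=1}^{h-1}\frac{x-i}{h-i},
$$
where I have pulled out the $i=0$ factor, which equals $h/x$. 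So the plan is to differentiate and show the derivative is positive; logarithmic differentiation will be the least painful way to handle the product.

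First I would take logarithms. Writing
$$
\log\bigl(\theta(x,h)^h\bigr) = \sum_{i=0}^{h-1}\bigl(\log(x-i) - \log x\bigr) + C,
$$
where $C = \sum_{i=0}^{h-1}\bigl(\log h - \log(h-i)\bigr)$ is a constant independent of $x$, I would differentiate with respect to $x$. The constant drops out, and each summand $\log(x-i)-\log x$ has derivative $\frac{1}{x-i} - \frac{1}{x}$. Hence
$$
\frac{d}{dx}\log\bigl(\theta(x,h)^h\bigr) = \sum_{i=0}^{h-1}\left(\frac{1}{x-i} - \frac{1}{x}\right) = \sum_{i=1}^{h-1}\left(\frac{1}{x-i} - \frac{1}{x}\right),
$$
the $i=0$ term vanishing. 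For $x > h > i \ge 1$ we have $0 < x-i < x$, so each term $\frac{1}{x-i} - \frac{1}{x}$ is strictly positive, and since $h \ge 2$ there is at least one such term. Therefore the logarithmic derivative is strictly positive on $(h,\infty)$, which gives that $\theta(x,h)^h$, and hence $\theta(x,h)$, is strictly increasing there; continuity at the endpoint $x=h$ then extends strict monotonicity to all of $[h,\infty)$.

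I do not expect a genuine obstacle here, since the derivative computation is elementary once the product form is in place. The only points requiring a little care are bookkeeping ones: ensuring that the factor of $h/x$ arising from the $i=0$ index is correctly isolated so that its contribution to the derivative cancels, and confirming positivity of each summand using $x > h$ (so that $x - i > 0$ for all $i \le h-1$, keeping every logarithm well-defined). An alternative, if one prefers to avoid calculus, would be to compare $\theta(x,h)^h$ and $\theta(x',h)^h$ for $h \le x < x'$ directly via the product formula, showing term by term that each ratio $\frac{(x-i)/x}{(x'-i)/x'}$ moves in the right direction; but the logarithmic-derivative argument is shorter and I would present that as the main proof.
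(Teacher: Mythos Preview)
Your proof is correct and follows exactly the approach the paper sketches: reduce to showing $\theta(x,h)^h$ is strictly increasing and verify this via positivity of its derivative, which the paper leaves to the reader and you carry out by logarithmic differentiation. One small expository slip: the $i=0$ factor in the product is $\tfrac{h\cdot x}{x\cdot h}=1$, not $h/x$; your displayed rewriting is nonetheless correct, since the $(h/x)^{h-1}$ comes from extracting $h/x$ from each of the remaining $h-1$ factors.
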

\begin{proof} It is equivalent to show that the map $x \mapsto \theta(x,h)^h$ is strictly increasing. This easily follows from the positivity of its derivative. Details are left to the reader.
\end{proof}

\subsection{Asymptotics}
We provide here, somewhat informally, a good approximation of $\theta(x,h)$ together with its asymptotic behavior as $x$ grows. Recall Stirling's approximation of $n!$ for large $n$:
$$
n! \sim \sqrt{2 \pi n}\left(\frac n e\right)^n.
$$
On the other hand, the bounds below are valid for all $n \ge 1$:

$$
\sqrt{2 \pi}\:n^{n+1/2}\:e^{-n} \le n! \le e\:n^{n+1/2}\:e^{-n}.
$$
This yields the following well known approximation of $\displaystyle \binom nk$ for $n$ much larger than $k$, see e.g.~\cite{Wi}: 
$$
\binom nk \sim \frac{(n/k-1/2)^k\:e^k}{\sqrt{2 \pi k}}.
$$

As a consequence, here is the asymptotic behavior of $\theta(x,h)$ when $x$ grows. 

\begin{prop}\label{prop limit} Let $h \ge 2$ be an integer. Then
$$
\theta(x,h) \sim \frac{(1-h/(2x))\:e}{(2 \pi h)^{1/(2h)}}=\frac{(2x-h)\:e}{2x(2 \pi h)^{1/(2h)}}.
$$
In particular, 
$$
\lim_{x \to \infty}\theta(x,h) = (2 \pi h)^{-1/(2h)}\:e.
$$
\end{prop}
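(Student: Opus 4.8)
The plan is to prove the asymptotic equivalence for $\theta(x,h)$ by substituting the stated approximation for the binomial coefficient directly into the definition of $\theta(x,h)$, and then to extract the limit as a corollary by letting $x \to \infty$. Since the proposition fixes $h \ge 2$ and sends $x \to \infty$, the relevant regime is exactly the ``$n$ much larger than $k$'' regime of the displayed approximation $\binom{n}{k} \sim (n/k-1/2)^k e^k/\sqrt{2\pi k}$, with $n=x$ and $k=h$.

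First I would apply this approximation with $n = x$ and $k = h$, obtaining
$$
\binom{x}{h} \sim \frac{(x/h - 1/2)^h\, e^h}{\sqrt{2\pi h}}.
$$
Raising to the power $1/h$ gives
$$
\binom{x}{h}^{1/h} \sim \left(\frac{x}{h} - \frac12\right) e\, (2\pi h)^{-1/(2h)},
$$
where I am using that the $h$th root is a continuous function, so it preserves asymptotic equivalence for a fixed $h$. Multiplying by $h/x$ as in Notation~\ref{notation theta} yields
$$
\theta(x,h) = \frac hx \binom{x}{h}^{1/h} \sim \frac hx \left(\frac xh - \frac12\right) e\,(2\pi h)^{-1/(2h)} = \left(1 - \frac{h}{2x}\right) e\,(2\pi h)^{-1/(2h)},
$$
which is the first claimed formula; the second form $(2x-h)e/\bigl(2x(2\pi h)^{1/(2h)}\bigr)$ is just the algebraic rewriting of $h/x \cdot (x/h - 1/2) = 1 - h/(2x) = (2x-h)/(2x)$.

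For the limit, I would observe that as $x \to \infty$ with $h$ fixed, the factor $1 - h/(2x) \to 1$, while $e\,(2\pi h)^{-1/(2h)}$ is a constant independent of $x$; hence $\lim_{x\to\infty}\theta(x,h) = (2\pi h)^{-1/(2h)} e$. The one point requiring a little care is that the symbol $\sim$ in the binomial approximation is an asymptotic statement in $n$ for fixed $k$, so all the manipulations must be read as $x \to \infty$ with $h$ held fixed; since $h$ is constant, taking the $h$th root and multiplying by the deterministic factor $h/x$ are legitimate operations that commute with the asymptotic relation.

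The main obstacle, if any, is essentially bookkeeping rather than a genuine difficulty: one must make sure the approximation $\binom{n}{k}\sim (n/k-1/2)^k e^k/\sqrt{2\pi k}$ is being invoked in the correct variable (letting $x\to\infty$ with $h$ fixed), and that the $h$th root interacts correctly with $\sim$. Because the proposition is explicitly flagged as ``somewhat informal'' and rests on the stated Stirling-type bounds, I would not belabor the error analysis; the heavier lifting was done in establishing the binomial approximation itself, which is cited. Everything downstream is a continuous, $h$-fixed substitution, so no new estimate is required.
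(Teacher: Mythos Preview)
Your proposal is correct and follows exactly the paper's approach: the paper's proof is the single line ``Directly follows from the above approximation of the binomial coefficients,'' and you have simply spelled out that substitution explicitly. Your added care about taking the $h$th root and the regime $x\to\infty$ with $h$ fixed is appropriate and matches the informal spirit the paper announces.
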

\begin{proof} Directly follows from the above approximation of the binomial coefficients.
\end{proof}

\subsection{When $\theta(x,h) \ge 1.5$}\label{sec theta ge 1.5}

Our multiplicative improvement factor $\theta(x,h)$ over the Plünnecke-based estimate
\begin{equation}\label{plu}
|(h-1)A| \ge |hA|^{(h-1)/h}
\end{equation}
exceeds $1.5$ quite early in terms of $x$ or $h$. Indeed, one observes that \emph{the smallest integer $x$ for which $\theta(x,h) \ge 1.5$ for some integer $h$ is $x=10$}, specifically at $h=4$ and $5$. Even starting at $h=3$, we have 
\begin{equation}\label{case h=3}
\theta(x,3) \ge 1.509
\end{equation}
for all $x \ge 12$. As an example of application, these observations, together with Theorem~\ref{thm vs}, yield the following improvements of~\eqref{plu} for $h=3,4,5$.

\begin{cor} Let $A$ be a finite subset of an abelian group $G$. Then
\begin{eqnarray*}
|3A| \ge 220 & \Longrightarrow & |2A| \ge \frac 32 |3A|^{2/3}, \\
|4A| \ge 210  & \Longrightarrow & |3A| \ge \frac 32 |4A|^{3/4}, \\
|5A| \ge 252  & \Longrightarrow & |4A| \ge \frac 32 |5A|^{4/5}.
\end{eqnarray*}
\end{cor}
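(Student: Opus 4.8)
The plan is to handle the three implications uniformly, reading each as a direct instance of Theorem~\ref{thm vs} once the appropriate lower bound on $\theta(x,h)$ is in hand. Fix $h \in \{3,4,5\}$, and let $x \ge h$ be the unique real number with $|hA|=\binom{x}{h}$ provided by Lemma~\ref{lem binom}. Theorem~\ref{thm vs} then gives $|(h-1)A| \ge \theta(x,h)\,|hA|^{(h-1)/h}$, so the whole task reduces to showing that the hypothesis on $|hA|$ forces $\theta(x,h) \ge \frac 32$.

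First I would identify the three threshold cardinalities as exact binomial coefficients, namely $\binom{12}{3}=220$, $\binom{10}{4}=210$, and $\binom{10}{5}=252$. Since the map $y \mapsto \binom{y}{h}$ is strictly increasing on $[h-1,\infty)$ by Lemma~\ref{Onto}, the hypothesis $|3A| \ge 220$ is equivalent to $x \ge 12$ when $h=3$, whereas each of $|4A| \ge 210$ and $|5A| \ge 252$ is equivalent to $x \ge 10$ when $h=4$ and $h=5$ respectively.

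Next I would transport these lower bounds on $x$ into lower bounds on $\theta(x,h)$ through the strict monotonicity in $x$ recorded in Proposition~\ref{prop increasing}. For $h=3$ this gives $\theta(x,3) \ge \theta(12,3) \ge 1.509 > \frac 32$ as soon as $x \ge 12$, using the estimate~\eqref{case h=3}. For $h \in \{4,5\}$ it gives $\theta(x,h) \ge \theta(10,h) \ge \frac 32$ as soon as $x \ge 10$, using the observation that $x=10$ is the least integer at which $\theta(x,h) \ge 1.5$ holds and that it does so precisely at $h=4,5$. Substituting these into Theorem~\ref{thm vs} yields the three displayed implications.

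The entire argument therefore reduces to the finitely many numerical checks $\theta(12,3) \ge 1.509$ and $\theta(10,4),\theta(10,5) \ge \frac 32$, which are exactly the observations already established above. Concretely, by formula~\eqref{formula theta} one has $\theta(10,4)^4=(2/5)^4\cdot 210$ and $\theta(10,5)^5=(1/2)^5\cdot 252$, compared against $(3/2)^4$ and $(3/2)^5$, while $\theta(12,3)^3=(1/4)^3\cdot 220$ is compared against $1.509^3$. I expect this short, purely arithmetic verification to be the only real step, the rest being the monotonicity bookkeeping above.
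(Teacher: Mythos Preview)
Your proposal is correct and follows essentially the same route as the paper: identify the thresholds $220=\binom{12}{3}$, $210=\binom{10}{4}$, $252=\binom{10}{5}$, translate the hypotheses into $x\ge 12$ or $x\ge 10$, and then invoke the monotonicity of $\theta(\cdot,h)$ together with the numerical estimates $\theta(12,3),\theta(10,4),\theta(10,5)\ge 3/2$ to conclude via Theorem~\ref{thm vs}. Your version is slightly more explicit in citing Proposition~\ref{prop increasing} and spelling out the arithmetic checks via~\eqref{formula theta}, but the argument is the same.
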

\begin{proof} For $h=3$, let $x$ be the unique real number greater than $3$ such that $|3A|=\binom{x}{3}$. Since $|3A| \ge 220=\binom{12}{3}$, we have $x \ge 12$, whence $\theta(x,3) \ge 1.5$ by~\eqref{case h=3}. The conclusion follows from Theorem~\ref{thm vs}. For $h=4$ and $5$, we have $\binom{10}{4}=210$ and $\binom{10}{5}=252$. The rest of the proof is similar, using the above-mentioned estimates $\theta(x,4), \theta(x,5)  \ge 1.5$ for all $x \ge 10$.
\end{proof}

\subsection{When $\theta(x,h) \ge 2$}\label{sec theta ge 2}
We now examine circumstances guaranteeing $\theta(x,h) \ge 2$, a case of interest where our bound in Theorem~\ref{thm vs} is at least twice better than~\eqref{plu}. As it turns out, for $x$ large, one has $\theta(x,h) \ge 2$ for almost all integers $h$ between $6$ and $\lfloor x/2 \rfloor$. We also describe cases where $\theta(x,h)$ gets very close to its upper bound $e$. 

\smallskip
So, under what minimal circumstances, in terms of $h$ or of $x$, do we have $\theta(x,h) \ge 2$? First note that if $y \ge h-1$ then $\theta(y,h) < \lim_{x \to \infty}\theta(x,h)$, as follows from Proposition~\ref{prop increasing}. Moreover, $\lim_{x \to \infty}\theta(x,h_1)$ $\le$ $\lim_{x \to \infty}\theta(x,h_2)$ whenever $h_1 \le h_2$, as  follows from Proposition~\ref{prop limit}.

That being said, consider the case $h=5$. Since $\lim_{x \to \infty}\theta(x,5) < 1.926$ by Proposition~\ref{prop limit}, the values $1 \le h \le 5$ are excluded for the occurrence of $\theta(x,h) \ge 2$.  However, already $h=6$ qualifies, as $\lim_{x \to \infty}\theta(x,6) > 2.007$. More precisely, we have
\begin{equation}\label{ge 2}
\theta(x,6) \ge 2
\end{equation}
for all $x \ge 1210$, the least integer with that property. 

If now $h$ is allowed to grow, then $\theta(x,h) \ge 2$ may occur for much smaller values of $x$. Indeed, the smallest $x \in \bN$ for which $\theta(x,h) \ge 2$ for some $h$ is $x=48$, namely at $h=11$ and $12$. More precisely, we have
$$
\begin{matrix}
\theta(48,11) > 2.001, & \theta(48,12) > 2.002, \\
\theta(48,10) < 1.997, & \theta(48,13) < 1.999.
\end{matrix}
$$
See Figure~\ref{graph 1}.

\begin{figure}[h]
\begin{center}
\includegraphics[width=9cm]{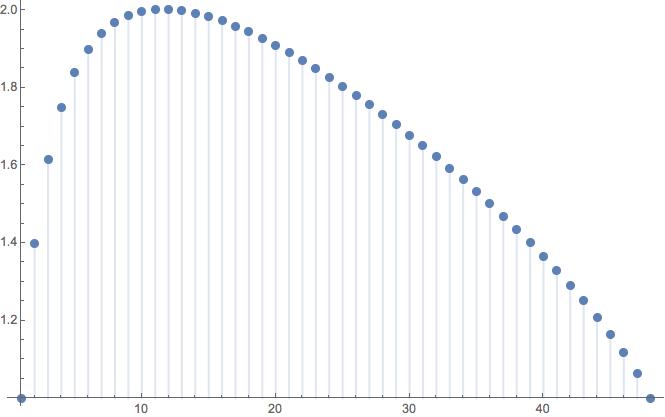}
\end{center}
\caption{Values of $\theta(48,h)$ for $h=1, \dots,48$}
\label{graph 1}
\end{figure}

In fact, when $x$ goes to infinity, then $\theta(x,h) \ge 2$ \emph{holds for almost all positive integers $h \le x/2$}. Indeed, as observed in~\eqref{ge 2}, we have $\theta(x,6) \ge 2$ for all $x \ge x_0=1210$. Now, numerical computations at $x_0$ yield 
\begin{equation}\label{1210}
\theta(x_0,h) \ge 2 \quad \forall h \in [6,x_0/2-10] \cap \bN.
\end{equation}
Together with Theorem~\ref{thm vs}, this yields the following factor 2 improvement over the Plünnecke-based estimate~\eqref{plu}.
\begin{cor} Let $h$ be an integer such that $6 \le h \le 595$. Let $A$ be a subset of an abelian group $G$ such that $|hA| \ge \binom{2h+20}{h}$. Then 
$$
|(h-1)A| \ge 2 |hA|^{(h-1)/h}.
$$
\end{cor}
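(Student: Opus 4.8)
The plan is to reduce the corollary to the single scalar inequality $\theta(x,h) \ge 2$ and then feed it into Theorem~\ref{thm vs}. First I would use Lemma~\ref{lem binom} to fix the unique real number $x \ge h$ with $|hA| = \binom{x}{h}$, which makes the quantity $\theta(x,h)$ of Notation~\ref{notation theta} meaningful and lets Theorem~\ref{thm vs} apply. The hypothesis $|hA| \ge \binom{2h+20}{h}$ rewrites as $\binom{x}{h} \ge \binom{2h+20}{h}$; since $y \mapsto \binom{y}{h}$ is a strictly increasing bijection on $[h-1,\infty)$ by Lemma~\ref{Onto}, this is equivalent to the clean lower bound $x \ge 2h+20$, i.e. $h \le x/2 - 10$. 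Thus the point $(x,h)$ lands in the region $6 \le h \le x/2 - 10$ that is singled out in~\eqref{1210}.

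The heart of the argument is to upgrade the numerical fact~\eqref{1210} --- which only certifies $\theta(x_0,h) \ge 2$ at the fixed reference value $x_0 = 1210$ for every integer $h \in [6, x_0/2 - 10] = [6,595]$ --- into the inequality $\theta(x,h) \ge 2$ at the actual $x$ produced above. The mechanism is Proposition~\ref{prop increasing}: for fixed $h \ge 2$ the map $x \mapsto \theta(x,h)$ is strictly increasing. Hence, provided $x \ge x_0$ and $h \in [6,595]$, monotonicity gives $\theta(x,h) \ge \theta(x_0,h) \ge 2$. Granting $\theta(x,h) \ge 2$, Theorem~\ref{thm vs} then finishes everything in one line, namely $|(h-1)A| \ge \theta(x,h)\,|hA|^{(h-1)/h} \ge 2\,|hA|^{(h-1)/h}$, which is exactly the claimed factor-$2$ improvement.

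I expect the delicate point to be this transfer step, since Proposition~\ref{prop increasing} propagates the bound $\theta \ge 2$ only toward \emph{larger} values of $x$. The naive reduction $\theta(x,h) \ge \theta(2h+20,h)$ obtained directly from $x \ge 2h+20$ is not by itself enough, because $\theta(2h+20,h)$ falls below $2$ for the smallest admissible values of $h$; so the proof must instead route through the validated reference value $x_0 = 1210$. The substantive thing to pin down is therefore the calibration between the threshold $\binom{2h+20}{h}$ and $x_0$, in particular the boundary behaviour at the endpoint $h = 595 = x_0/2 - 10$, where $2h+20 = x_0$. Once this placement of $(x,h)$ relative to $x_0$ is settled, the remaining steps --- the application of Lemma~\ref{lem binom} and the substitution into Theorem~\ref{thm vs} --- are entirely routine.
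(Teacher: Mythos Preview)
Your plan is exactly the paper's: fix $x$ via Lemma~\ref{lem binom}, convert $|hA|\ge\binom{2h+20}{h}$ into $x\ge 2h+20$ using Lemma~\ref{Onto}, then combine~\eqref{1210} with Proposition~\ref{prop increasing} to obtain $\theta(x,h)\ge 2$ and finish with Theorem~\ref{thm vs}. You are also right to isolate the transfer from~\eqref{1210} at $x_0=1210$ to the actual value of $x$ as the only nontrivial step.

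The gap you flag there is genuine, and the paper's own proof shares it. The paper writes the chain $x\ge 2h+20\ge 1210=x_0$, but the second inequality $2h+20\ge 1210$ holds only at the top endpoint $h=595$; for every smaller $h$ it is false. Concretely, at $h=6$ the hypothesis yields only $x\ge 32$, and since the paper itself records that $1210$ is the least integer with $\theta(x,6)\ge 2$, one has $\theta(32,6)\approx 1.84<2$. Thus neither the direct bound $\theta(x,6)\ge\theta(32,6)$ nor the route through~\eqref{1210} via Proposition~\ref{prop increasing} delivers $\theta(x,6)\ge 2$. Your diagnosis that ``$\theta(2h+20,h)$ falls below $2$ for the smallest admissible values of $h$'' is exactly the obstruction, and it cannot be repaired from the stated hypothesis: the threshold $\binom{2h+20}{h}$ is simply too weak at the low end of the range $6\le h\le 595$. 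The argument, as written both by you and by the paper, only goes through at $h=595$.
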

\begin{proof} Let $x \ge h$ satisfy $|hA|=\binom{x}{h}$. Since $|hA| \ge \binom{2h+20}{h}$, it follows that $x \ge 2h+20 \ge 1210=x_0$. Hence $h \in [6,x_0/2-10] \cap \bN$, whence $\theta(x,h) \ge 2$ by~\eqref{1210} and Proposition~\ref{prop increasing}. The conclusion follows from Theorem~\ref{thm vs}.
\end{proof}

As yet another instance, for $x_1=10^6$ now, one has an almost identical statement as in~\eqref{1210} for $x_0=1210$, namely
\begin{equation}\label{10^6}
\theta(x_1,h) \ge 2 \quad \forall h \in [6,x_1/2-19] \cap \bN.
\end{equation}

Statements~\eqref{1210} and~\eqref{10^6} are no accident, as hinted by the following result.

\begin{prop} One has $\lim_{x \to \infty}\theta(x,\lfloor x/2\rfloor) = 2$.
\end{prop}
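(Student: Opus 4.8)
The plan is to pass to logarithms. Writing $h=\lfloor x/2\rfloor$, Notation~\ref{notation theta} gives
$$
\ln\theta(x,h)=\ln\frac hx+\frac 1h\ln\binom xh .
$$
Since $h/x\to 1/2$ as $x\to\infty$, the first summand tends to $-\ln 2$, so the statement reduces to proving
$$
\frac 1h\ln\binom xh\longrightarrow 2\ln 2 ,
$$
from which $\ln\theta\to-\ln 2+2\ln 2=\ln 2$, i.e. $\theta\to 2$.

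The main obstacle is that the asymptotic $\binom nk\sim (n/k-1/2)^k e^k/\sqrt{2\pi k}$ underlying Proposition~\ref{prop limit} is valid only when $k$ is small compared with $n$, and it \emph{fails} in the present balanced regime $h\sim x/2$; one must redo the asymptotics there from scratch. I would base this on the explicit Stirling bounds $\sqrt{2\pi}\,n^{n+1/2}e^{-n}\le n!\le e\,n^{n+1/2}e^{-n}$ recalled above. First, to replace the real argument $x$ by integers, I would use the monotonicity of $y\mapsto\binom yh$ from Lemma~\ref{Onto} to sandwich
$$
\binom{\lfloor x\rfloor}h\le\binom xh\le\binom{\lceil x\rceil}h
$$
(valid once $\lfloor x\rfloor\ge h$), reducing the goal to showing $\frac 1h\ln\binom nh\to 2\ln 2$ for integers $n$ with $n/h\to 2$; both $n=\lfloor x\rfloor$ and $n=\lceil x\rceil$ meet this condition.

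For integers $0<h<n$, applying the Stirling bounds to $\binom nh=n!/(h!(n-h)!)$ yields
$$
c_1\,\frac{n^{n+1/2}}{h^{h+1/2}(n-h)^{n-h+1/2}}\le\binom nh\le c_2\,\frac{n^{n+1/2}}{h^{h+1/2}(n-h)^{n-h+1/2}}
$$
for absolute constants $0<c_1\le c_2$. Taking $\frac 1h\ln(\cdot)$, the bounded prefactors and all the $\tfrac12\ln$ terms contribute $O\bigl((\ln n)/h\bigr)\to 0$, leaving the main term
$$
\frac 1h\bigl[n\ln n-h\ln h-(n-h)\ln(n-h)\bigr]=\frac nh\,H\!\left(\frac hn\right),
$$
where $H(p)=-p\ln p-(1-p)\ln(1-p)$ is the binary entropy. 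Since $h/n\to 1/2$ and $n/h\to 2$, this tends to $2\,H(1/2)=2\ln 2$, as required.

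In summary, the difficulty is conceptual rather than computational: once one recognizes that the fixed-$h$ estimate of Proposition~\ref{prop limit} does not cover the balanced case and that this case is governed by the entropy value $H(1/2)=\ln 2$, the remaining work---controlling the floor functions, the $\tfrac12\ln$ corrections and the bounded Stirling prefactors---is routine. Alternatively, one may avoid the integer reduction entirely by writing $\frac1h\ln\binom xh=\frac1h\sum_{i=0}^{h-1}\ln\frac{x-i}{h-i}$ as a Riemann sum converging to $\int_0^1\ln\frac{2-t}{1-t}\,dt=2\ln 2$, at the cost of justifying the integrable singularity at $t=1$.
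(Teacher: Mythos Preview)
Your proof is correct and rests on the same idea as the paper's---Stirling's formula applied to the near-central binomial coefficient---but you carry it out with considerably more care. The paper's argument is a one-line sketch: it asserts that Stirling gives $\theta(n,\lfloor n/2\rfloor)\approx 2\,(2/(\pi n))^{1/n}$ and concludes since $(cn)^{-1/n}\to 1$. Your version makes explicit the passage from real $x$ to integer $n$ via Lemma~\ref{Onto}, isolates the entropy term $\tfrac{n}{h}H(h/n)\to 2\ln 2$ as the main contribution, and correctly flags that the fixed-$h$ asymptotic of Proposition~\ref{prop limit} is inapplicable in this balanced regime. The Riemann-sum alternative you sketch is also sound and genuinely different in flavor, though as you note it requires handling the integrable singularity at $t=1$ (e.g.\ by splitting off the last $O(h^{1/2})$ terms of the sum). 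In short, your route and the paper's coincide in substance; yours is simply more rigorous.
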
 
\begin{proof} Using Stirling's approximation formula of $n!$, one readily sees that
$$
\theta(n,\lfloor n/2\rfloor) \approx 2 \left(\frac{2}{\pi n}\right)^{1/n},
$$
which proves the claim since $\lim_{n \to \infty}(c n)^{-1/n} = 1$ for any constant $c > 0$.
\end{proof}

\subsection{The highest point}\label{sec high}

For fixed $x$, the general shape of $\theta(x,h)$ when $h$ runs from $1$ to $\lfloor x \rfloor$ is well illustrated by Figure~\ref{graph 1} for $x=48$. Figure~\ref{graph 2} displays the case $x=1000$.

\begin{figure}[h]
\begin{center}
\includegraphics[width=9cm]{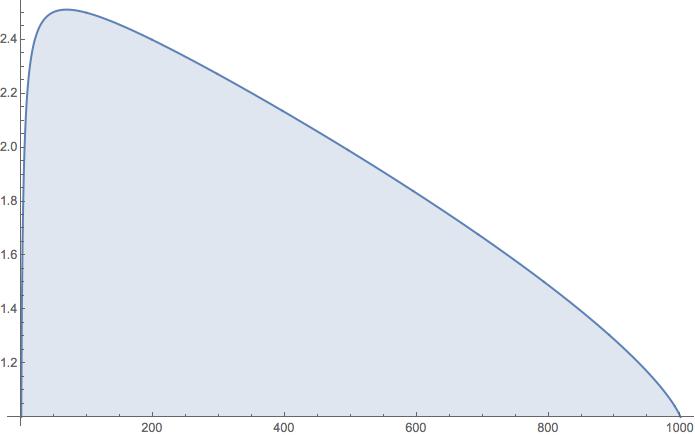}
\end{center}
\caption{Values of $\theta(1000,h)$ for $h=1, \dots,1000$}
\label{graph 2}
\end{figure}

It would be desirable to determine the highest point of that curve, and in particular the integer $1 \le h \le x$ maximizing $\theta(x,h)$. We do not have yet a precise answer. Nevertheless, by computing derivatives of the approximation of $\theta(x,h)$ provided by Proposition~\ref{prop limit}, one sees that for fixed $x$,
\begin{equation}\label{positive derivative}
\frac{\partial}{\partial h} \left(\frac{2x-h}{(2 \pi h)^{1/(2h)}}\right) > 0 \iff 2h^2 < (2x-h)(\ln(2 \pi h)-1).
\end{equation}
Thus, for $x$ fixed, the sought-for integer $h$ maximizing $\theta(x,h)$ occurs when
\begin{equation}\label{max}
2h^2 \approx (2x-h)(\ln(2 \pi h)-1).
\end{equation}
For instance, for $x_0=100$, the maximum of $\theta(x_0,h)$ is reached at $h=18$, for which $\theta(100,18) \approx 2.177$. Hence $$\theta(x,18) \ge 2.177$$ for all $x \ge 100$, as follows from Proposition~\ref{prop increasing}.

\subsection{For $h$ fixed}\label{sec h fixed}
In the opposite direction, for $h$ fixed, it is easy to locate the real number $x_1 \ge h$ maximizing $\theta(x,h)$. Indeed, using~\eqref{positive derivative}, we find 
$$
x_1 \approx \frac 12\left(\frac{2h^2}{\ln(2 \pi h)-1}+h\right).
$$
This suggests that 
$$
\lim_{x \to \infty} \theta(x,\lfloor x^{1/2}\rfloor) = e,
$$
as is fully confirmed by numerical experiments. As a concrete illustration, here are instances where $\theta(x,h)$ gets very close to $e$:
\begin{itemize}
\item For all $x \ge 200000$ and all $1200 \le h \le 1300$, one has
$\theta(x,h) \ge 2.70$.  \vspace{-0.2cm}
\item Similarly, for all $x \ge 1100000$ and all $2600 \le h \le 3700$, one has
$\theta(x,h) \ge 2.71$.
\end{itemize}

\section{A presentation of $R(A)$}\label{sec presentation}

Reusing the notation of Section~\ref{sec R(A)}, let $A=\{a_1,\dots,a_n\}$ be a nonempty finite subset of an abelian group $(G,+)$. For future use, it is algebraically necessary to determine the relations between the given generators $t^{a_i}Y$ of the associated algebra $R(A)$. Our aim here is thus to identify $R(A)$ as the quotient of the polynomial algebra $K[X_1,\dots,X_n]$ by a suitable homogeneous ideal $I$.

\begin{notation}
For $\alpha=(\aa_1,\dots,\aa_n) \in \bN^n$, let $X^\alpha = X_1^{\aa_1}\cdots X_n^{\aa_n}$ denote the corresponding monomial in $K[X_1,\dots,X_n]$. We denote the set of these monomials by $M = \{X^{\alpha} \mid \alpha \in \bN^n\}$ .
\end{notation}

Let $\varphi \colon K[X_1,\dots,X_n] \to R(A)$ be the surjective morphism induced by $\varphi(X_i)=t^{a_i}Y$ for all $i$.
On the set $M$, we define the equivalence relation
$$
u \sim v \iff \varphi(u)=\varphi(v)
$$
for all $u,v \in M$. Equivalently, let us write $u = X^\alpha, v=X^\beta$ with $\alpha=(\alpha_1,\dots, \alpha_n), \beta=(\beta_1,\dots,\beta_n) \in \bN^n$. Then 
$$
X^\alpha \sim X^\beta
\, \iff \,
\left\{
\begin{array}{lll}
\sum_i \alpha_i & = & \sum_i \beta_i, \\
\sum_i \alpha_i a_i & = & \sum_i \beta_i a_i.
\end{array}
\right.
$$
In particular, equivalent monomials have the same degree, where as usual $\deg(X^\alpha)=\sum_i \alpha_i$. 

We shall need the notion of \emph{simple polynomial} relative to $\sim$. 

\begin{defn} Let $f \in K[X_1,\dots,X_n]$. We say that $f$ is \emph{simple} if $f \not=0$ and all monomials occurring in $f$ are equivalent under $\sim$. 
\end{defn} 
Observe that a simple polynomial is homogeneous. Indeed, equivalent monomials under $\sim$ have the same degree as observed above. Moreover, every nonzero polynomial $g \in K[X_1,\dots,X_n]$ may be decomposed, in a unique way up to order, as the sum $g=f_1+\cdots+f_r$ of maximal simple polynomials $f_i$, in the sense that for all $i \not= j$, the monomials occurring in $f_i$ are non-equivalent under $\sim$ to those of $f_j$. The $f_i$ are obtained by simply regrouping the monomials of $f$ into maximal equivalence classes. We shall refer to the $f_i$ as the \emph{simple components} of $f$. See e.g. \cite[p. 232]{E0} and \cite[p. 346]{EV}, where similar notions were used.

\begin{lem}\label{lem simple} Let $g \in \ker(\varphi) \setminus \{0\}$. Then every simple component of $g$  belongs to $\ker(\varphi)$.
\end{lem}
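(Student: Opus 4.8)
We have the surjection $\varphi\colon K[X_1,\dots,X_n] \to R(A)$ sending $X_i \mapsto t^{a_i}Y$. A polynomial is *simple* if all its monomials lie in a single $\sim$-class, and every $g$ decomposes uniquely as $g = f_1 + \cdots + f_r$ into simple components with monomials from distinct $\sim$-classes. The claim is that if $g \in \ker(\varphi)$, then each $f_i \in \ker(\varphi)$ too. Let me think about why this should be true.

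**Key observation.** The map $\varphi$ sends a monomial $X^\alpha$ to $t^{\sum_i \alpha_i a_i} Y^{\sum_i \alpha_i}$. So $\varphi(X^\alpha) = \varphi(X^\beta)$ iff $X^\alpha \sim X^\beta$ — that's literally how $\sim$ was defined. The images of monomials under $\varphi$ are (scalar multiples of) basis elements $t^b Y^h$ of $S$, and two monomials map to the *same* basis element precisely when they're $\sim$-equivalent. This is the crux: $\varphi$ is "graded" by the $\sim$-classes in a way that's compatible with the $K$-basis $\mathcal{B}$ of $S$.

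**The plan.** I'd argue as follows. Write $g = \sum_\alpha c_\alpha X^\alpha$ and apply $\varphi$: $\varphi(g) = \sum_\alpha c_\alpha t^{\sum_i \alpha_i a_i} Y^{\sum_i \alpha_i}$. Group the monomials by $\sim$-class — equivalently, by the value of the pair $(\sum_i \alpha_i a_i, \sum_i \alpha_i) \in G \times \bN$. Within a single $\sim$-class $C$, all monomials map to the *same* basis element $t^{b_C} Y^{h_C}$ of $S$, and distinct $\sim$-classes map to *distinct* basis elements (again by the definition of $\sim$). So in the expansion of $\varphi(g)$ in the $K$-basis $\mathcal{B}$, the coefficient of $t^{b_C} Y^{h_C}$ is exactly $\sum_{\alpha \in C} c_\alpha = \varphi(f_C)$-coefficient, where $f_C$ is the simple component supported on $C$. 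Since the basis elements are $K$-linearly independent, $\varphi(g) = 0$ forces each such coefficient to vanish separately — which says precisely $\varphi(f_i) = 0$ for every simple component $f_i$.

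**Where the work is.** There's no real obstacle here; the whole content is that $\varphi$ maps each $\sim$-class injectively onto a single basis element of $\mathcal{B}$, so the images of distinct simple components land in $K$-linearly independent lines in $S$ and cannot cancel one another. The one point to state carefully is that distinct $\sim$-classes give distinct basis elements $t^{b}Y^{h}$: this is the reverse implication in the definition of $\sim$, namely that $X^\alpha \not\sim X^\beta$ means $(\sum_i \alpha_i a_i, \sum_i \alpha_i) \ne (\sum_i \beta_i a_i, \sum_i \beta_i)$, hence $t^{\sum \alpha_i a_i} Y^{\sum \alpha_i} \ne t^{\sum \beta_i a_i} Y^{\sum \beta_i}$ in $\mathcal{B}$. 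Thus the plan is to decompose $\varphi(g)$ along the basis $\mathcal{B}$ of $S$ and read off the vanishing of each simple component's image directly from linear independence.
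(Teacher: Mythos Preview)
Your argument is correct and is essentially the same as the paper's: both observe that $\varphi$ sends all monomials in a given $\sim$-class to a single basis element $t^bY^h$ of $S$, with distinct classes mapping to distinct basis elements, so linear independence of $\mathcal{B}$ forces the coefficient sum within each class---i.e., $\varphi(f_i)$---to vanish. The paper merely phrases this one simple component at a time rather than grouping all classes simultaneously.
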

\begin{proof} Let $f$ be a simple component of $g$. We must show $\varphi(f)=0$. Since $f$ is simple, it is homogeneous of some degree $h$. Write $f = \sum_i \lambda_i u_i$, where $\lambda_i \in K\setminus \{0\}$ for all $i$ and where the $u_i$ are pairwise distinct monomials. Since the $u_i$ are pairwise equivalent under $\sim$, we have $\varphi(u_i)=t^b Y^h$ for some $b \in hA$ independent of $i$. Hence 
$$\varphi(f)=(\sum_i \lambda_i) t^b Y^h.$$
Now, for any monomial $v$ occurring in $g$ but not in $f$, we have $\varphi(v)\not=t^b Y^h$ as $v$ is non-equivalent to the $u_i$. Since $\varphi(g)=0$, it follows that
$
\sum_i \lambda_i = 0.
$
Hence $\varphi(f)=0$, as desired.
\end{proof}

\begin{prop}  Let $I \subset K[X_1,\dots,X_n]$ be the ideal generated by the set 
$\{u-v \mid u,v \in M, u \sim v\}$. Then $\ker(\varphi)=I$.
\end{prop}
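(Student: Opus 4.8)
The plan is to prove the two inclusions $I \subseteq \ker(\varphi)$ and $\ker(\varphi) \subseteq I$ separately. The first is immediate: each generator $u-v$ of $I$ satisfies $u \sim v$, which by definition means $\varphi(u)=\varphi(v)$, so $\varphi(u-v)=0$. Hence every generator lies in $\ker(\varphi)$, and since $\ker(\varphi)$ is an ideal, so is all of $I$.

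For the reverse inclusion, I would reduce to the case of simple polynomials by means of Lemma~\ref{lem simple}. Let $g \in \ker(\varphi)\setminus\{0\}$ and decompose it into its simple components $g=f_1+\cdots+f_r$. By Lemma~\ref{lem simple} each $f_k$ lies in $\ker(\varphi)$, so it suffices to show that every simple polynomial $f \in \ker(\varphi)$ belongs to $I$; then $g$, being a sum of such, also lies in $I$.

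So fix a simple $f \in \ker(\varphi)$ and write $f=\sum_i \lambda_i u_i$ with $\lambda_i \in K\setminus\{0\}$ and the $u_i$ pairwise distinct monomials, all mutually equivalent under $\sim$. As in the proof of Lemma~\ref{lem simple}, applying $\varphi$ gives $\varphi(f)=(\sum_i \lambda_i)\,t^b Y^h$ for a common $b \in hA$, so $f \in \ker(\varphi)$ forces $\sum_i \lambda_i = 0$. The idea is then to rewrite $f$ relative to a single reference monomial, say $u_1$: since $u_i \sim u_1$ for every $i$, each difference $u_i-u_1$ is one of the defining generators of $I$ and hence lies in $I$. Writing
$$
f = \sum_i \lambda_i u_i = \sum_i \lambda_i (u_i-u_1) + \Big(\sum_i \lambda_i\Big) u_1
$$
and using $\sum_i \lambda_i = 0$, the last term vanishes, so $f=\sum_i \lambda_i(u_i-u_1) \in I$.

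There is no serious obstacle once Lemma~\ref{lem simple} is available: that lemma does the real work of justifying that a relation in $\ker(\varphi)$ splits along the equivalence classes of $\sim$. The only point requiring care is the observation that the coefficient sum of a simple polynomial in $\ker(\varphi)$ must vanish, which is precisely what lets the reference-monomial rewriting collapse $f$ into a $K$-linear combination of binomial generators of $I$. Combining both inclusions yields $\ker(\varphi)=I$.
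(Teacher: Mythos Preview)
Your proof is correct and follows essentially the same approach as the paper's: reduce to simple polynomials via Lemma~\ref{lem simple}, observe that the coefficient sum of a simple element of $\ker(\varphi)$ vanishes, and then rewrite $f$ as a $K$-linear combination of binomial generators of $I$. The only cosmetic difference is that the paper uses $u_r$ rather than $u_1$ as the reference monomial.
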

\begin{proof} We have $I \subset \ker(\varphi)$ by construction. Conversely, let $0 \not= f \in \ker(\varphi)$. By Lemma~\ref{lem simple}, we may further assume that $f$ is simple.  Write $f = \sum_{i=1}^r \lambda_i u_i$, where $\lambda_i \in K\setminus \{0\}$ for all $i$ and where the $u_i$ are pairwise distinct monomials. Since $\varphi(f)=0$ and $\varphi(u_i)=\varphi(u_j)$ for all $i \not= j$, it follows that $\sum_{i=1}^r \lambda_i=0$. Therefore $\lambda_r=-\sum_{i=1}^{r-1}\lambda_i$, and so
$$
f = \sum_{i=1}^{r-1} \lambda_i(u_i-u_r).
$$
Since $u_i \sim u_r$  for all $i$, it follows that $u_i-u_r \in I$. Hence $f \in I$, as desired.
\end{proof}

\begin{cor} We have $R(A)\simeq K[X_1,\dots,X_n]/I$.
\end{cor}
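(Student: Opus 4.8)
The plan is to invoke the First Isomorphism Theorem for $K$-algebras, since all the substantive work has already been carried out in the preceding Proposition. Recall that $\varphi \colon K[X_1,\dots,X_n] \to R(A)$ was introduced as a \emph{surjective} algebra morphism, its surjectivity being built into the construction: the images $\varphi(X_i) = t^{a_i}Y$ are precisely the generators spanning $R(A)$ as a $K$-algebra, so every element of $R(A)$ lies in the image of $\varphi$. Thus $\varphi$ is an epimorphism of $K$-algebras.

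First I would note that $\varphi$ factors through the quotient $K[X_1,\dots,X_n]/\ker(\varphi)$, inducing a canonical $K$-algebra isomorphism
$$
\overline{\varphi} \colon K[X_1,\dots,X_n]/\ker(\varphi) \;\xrightarrow{\ \sim\ }\; R(A),
$$
which is the standard conclusion of the First Isomorphism Theorem applied to a surjective morphism. Then I would substitute $\ker(\varphi) = I$, the identification established in the immediately preceding Proposition, to rewrite this isomorphism as
$$
K[X_1,\dots,X_n]/I \;\simeq\; R(A).
$$

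There is essentially no obstacle here, as the corollary is a formal consequence of two facts already in hand: the surjectivity of $\varphi$ (by construction) and the equality $\ker(\varphi) = I$ (by the Proposition). The genuine content lay in computing the kernel, for which the key ingredient was Lemma~\ref{lem simple} reducing membership in $\ker(\varphi)$ to the behavior on simple components, together with the observation that equivalent monomials under $\sim$ have equal image under $\varphi$. With $\ker(\varphi)=I$ secured, the present statement follows immediately.
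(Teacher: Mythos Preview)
Your proposal is correct and follows exactly the paper's approach: the paper's one-line proof ``By Noether's isomorphism theorem'' is precisely the First Isomorphism Theorem applied to the surjective morphism $\varphi$ together with the equality $\ker(\varphi)=I$ from the preceding Proposition. Your write-up is simply a more explicit unpacking of that same argument.
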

\begin{proof} By Noether's isomorphism theorem.
\end{proof}

\section{Concluding comments}\label{sec conclusion}

We end this paper with a few related questions and remarks. 

\medskip
A first natural question is, how far from optimal are our new bounds? More precisely, let $(G,+)$ be an abelian group, and let $h,i,m$ be positive integers such that $m \le |G|$. Among all subsets $A \subseteq G$ such that $|hA|=m$, what is 
\begin{itemize}
\item (inverse problem) the best possible lower bound on $|iA|$ for $i \le h$? \vspace{-0.2cm}
\item (direct problem) the best possible upper bound on $|iA|$  for $i \ge h$?
\end{itemize}
Accordingly, let us denote
$$
\omega_G(h,i,m)=
\left\{
\begin{array}{lll}
\min_{A \subseteq G} |iA| & \textrm{if} & i \le h, \\
\max_{A \subseteq G} |iA| & \textrm{if} & i \ge h,
\end{array}
\right.
$$
where in both cases, the extremum is taken over all subsets $A$ of $G$ satisfying $|hA|=m$. 

Focusing here on the direct problem with $i=h+1$, how large can $\omega_G(h,h+1,m)$ be? The upper bounds given successively by Plünnecke's inequality~\eqref{upper bound}, Theorem~\ref{main thm 2} based on the condensed version of Macaulay's theorem, and Theorem~\ref{main thm 1} based on Macaulay's theorem proper, are
\begin{eqnarray}
\omega_G(h,h+1,m) & \le & m^{(h+1)/h}, \label{omega 1} \\
\omega_G(h,h+1,m) & \le & \binom{x+1}{h+1}=\frac {x+1}{h+1} m, \label{omega 2} \\
\omega_G(h,h+1,m) & \le & m^{\vs{h}}, \label{omega 3}
\end{eqnarray}
respectively, where $x \ge h$ satisfies $\binom xh=m$. Applied to the case $(h,m)=(5,100)$ in Section~\ref{sec example}, these bounds yield successively
$$
\omega_G(5,6,100) \le
\left\{
\begin{array}{rcl}
251 & \textrm{by} & \eqref{omega 1}, \\
161 & \textrm{by} & \eqref{omega 2}, \\
152 & \textrm{by} & \eqref{omega 3}.
\end{array}
\right.
$$
The last one is probably close to optimal. Indeed, for $G=\Z$, we gave an example with $|5A|=100$ and $|6A|=145$, yielding $\omega_\Z(5,6,100) \ge 145$. Conjecture~\ref{conj 100} implies that this is best possible, i.e. that $\omega_\Z(5,6,100) = 145$. 

\medskip
As another natural question, can one specialize Macaulay's theorem by characterizing the Hilbert functions of all algebras of the form $R(A)$ for finite subsets $A$ of a given abelian group $G$? A positive answer would help tackle the former question.

\medskip
Finally, in a sequel to this paper, we will show two more aspects of the strength of Theorem~\ref{main thm 1}. The proof methods are quite different from the present ones, except that Macaulay's theorem remains central. First, we will show that Theorem~\ref{main thm 1} is \emph{asymptotically optimal}: the upper bound it provides, namely
$$
|(h+1)A| \le |hA|^{\vs{h}}
$$
for all $h \ge 1$, \emph{is in fact an equality for $h$ large enough}. Second, we will show that Theorem~\ref{main thm 1} is \emph{best possible} in the sense that, given any sequence of positive integers $(d_i)_{i \ge 0}$ such that $d_0=1$ and
$$
1 \le d_{i+1} \le d_i^{\vs{i}}
$$
for all $i \ge 1$, there exists a finite subset $A$ of a commutative semigroup $(G,+)$ such that
$$
d_h=|hA|
$$
for all $h \ge 0$. 

\medskip
Together, the present paper and its forthcoming sequel raise the prospect that Macaulay's theorem, an almost century-old classical result from commutative algebra, may emerge as a powerful new tool in additive combinatorics.

\bigskip
\noindent
\textbf{Acknowledgments.} This research was supported in part by the International Centre for Theoretical Sciences (ICTS) during a visit for the program - Workshop on Additive Combinatorics (Code: ICTS/wac2020/02). We are grateful to David Grynkiewicz for very useful discussions concerning this work during the ICTS Workshop.


\medskip
\noindent
\textbf{Authors' addresses:}

\begin{itemize}
\item[$\triangleright$] Shalom Eliahou, \\
Univ. Littoral C\^ote d'Opale, UR 2597 - LMPA - Laboratoire de Math\'ematiques Pures et Appliqu\'ees Joseph Liouville, F-62228 Calais, France and CNRS, FR2037, France. \\
\texttt{eliahou@univ-littoral.fr}
\item[$\triangleright$] Eshita Mazumdar, \\
Stat-Math Unit, ISI Bengaluru. \\
\texttt{eshita\_vs(at)isibang.ac.in}
\end{itemize}

\end{document}